\documentclass{amsart}
\usepackage{amsfonts, amsbsy, amsmath, amssymb}
\usepackage{tikz}

\newtheorem{thm}{Theorem}[section]
\newtheorem{lem}[thm]{Lemma}
\newtheorem{cor}[thm]{Corollary}
\newtheorem{prop}[thm]{Proposition}

\newtheorem{conj}[thm]{Conjecture}

\newtheorem{rmk}[thm]{Remark}

\newtheorem{ques}[thm]{Question}
\numberwithin{equation}{section}

\theoremstyle{definition}
\newtheorem{defn}[thm]{Definition}

\allowdisplaybreaks

\newcommand{\f}{\Bbb F}
\newcommand{\ch}{\text{\rm char}\,}

\newcommand{\fin}{f_{\text{\rm inv}}}
\newcommand{\tr}{\text{\rm Tr}}
\newcommand{\rank}{\text{\rm rank\,}}
\newcommand{\res}{\text{\rm Res}}
\newcommand{\wt}{\text{\rm wt}}

\begin{document}

\title[Sum-Freedom of Binary and $q$-ary Functions]{Further Results on Sum-Freedom of Binary and $q$-ary Functions}

\author[Xiang-dong Hou]{Xiang-dong Hou}
\address{Department of Mathematics and Statistics,
University of South Florida, Tampa, FL 33620}
\email{xhou@usf.edu}

\author[S. Zhao]{Shujun Zhao}
\address{Department of Mathematics and Statistics,
University of South Florida, Tampa, FL 33620}
\email{shujunz@usf.edu}

\keywords{APN function, Dickson matrix, finite fields, Lang-Weil bound, Reed-Muller code, Welch function}

\subjclass[2020]{11G20, 11T06, 11T71, 94D10}

\begin{abstract}
The notion of sum-freedom of binary functions was introduced recently by C. Carlet as a generalization of the APN functions used in cryptography; the $q$-ary version of the notion is a natural extension. For each integer $k$ with $0\le k\le n$, there is a $k$th order sum-free function on $\Bbb F_{2^n}$. It is also known that when $k/n$ is not close to 0 or 1, the multiplicative inverse function on $\Bbb F_{2^n}$ is not $k$th order sum-free. We generalize these two results to $q$-ary functions. APN functions have a coding theoretic characterization. We generalize the characterization to sum-free functions of arbitrary order over any finite field. It is well known that the Welch functions is 2nd order sum-free. We give an alternative proof for this result which leads to a more general algebraic question. We also investigate that the 3rd order sum-freedom of the Welch function and power functions of algebraic degree 3. We formulate a conjecture about the 3rd order sum-freedom of the Welch function which is supported by strong numerical evidence. 
\end{abstract}

\maketitle

\section{Introduction}

Let $\f_q$ denote the finite field with $q$ elements. A function $f$ from $\f_{q^n}$ (or $\f_q^n$) to itself is said to be {\em $k$th order sum-free} if for every $k$-dimensional $\f_q$-affine subspace $A$ of $\f_{q^n}$,
\begin{equation}\label{1.1}
\sum_{x\in A}f(x)\ne 0.
\end{equation}
The binary version of sum-free functions were introduced by Carlet \cite{Carlet-DCC-2024, Carlet-JC-2025}, and the $q$-ary version of sum-freedom was studied in \cite{Ebeling-Hou-Rydell-Zhao-FFA-2016}. Binary 2nd order sum-free functions are precisely {\em almost perfect nonlinear} (APN) functions, which have been extensively studied for their applications in cryptography \cite{Carlet-VBF-2010, Dobbertin-IEEE-IT-1999, Dobbertin-IC-1999, Dobbertin-2001, Hollmann-Xiang-FFA-2001, Nyberg-LNCS-547-1991, Nyberg-LNCS-1994}.

A natural question is this: given $k$, does there exist a $k$th order sum-free function on $\f_{q^n}$? For $q=2$, the answer is positive.

\begin{thm}\label{T1.1} \cite[Proposition~1]{Carlet-JC-2025} For $0\le k\le n$, the function $f(X)=X^{2^k-1}$ defined on $\f_{2^n}$ is $k$th order sum-free.
\end{thm}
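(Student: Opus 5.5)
We must show that for every $k$-dimensional affine subspace $A=a+V$ of $\f_{2^n}$, with $V$ an $\f_2$-linear subspace of dimension $k$, we have $\sum_{x\in A}x^{2^k-1}\neq 0$. The approach is to express this sum through the subspace polynomial of $V$ and read off a nonzero coefficient.

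Let $L_V(X)=\prod_{v\in V}(X-v)$. This is a linearized polynomial of degree $2^k$:
\[
L_V(X)=X^{2^k}+c_{k-1}X^{2^{k-1}}+\cdots+c_0X,\qquad c_0=\prod_{0\ne v\in V}v\neq 0 .
\]
Put $P(X)=\prod_{x\in A}(X-x)=L_V(X-a)=L_V(X)-L_V(a)$. Here $L_V(a)\neq 0$ when $a\notin V$, and $L_V(a)=0$ when $A=V$.

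To get the power sum $\sum_{x\in A}x^{2^k-1}$, we use the identity $\sum_{x\in A}\frac{1}{X-x}=P'(X)/P(X)$. In characteristic $2$ the derivative is $P'(X)=L_V'(X)=c_0$. Hence
\[
\sum_{x\in A}\frac{1}{X-x}=\frac{c_0}{L_V(X)-L_V(a)} .
\]
Expanding the left side at infinity gives $\sum_{m\ge0}p_m X^{-m-1}$, where $p_m=\sum_{x\in A}x^m$. Expanding the right side gives
\[
\frac{c_0}{X^{2^k}}\Bigl(1+\frac{c_{k-1}X^{2^{k-1}}+\cdots+c_0X+L_V(a)}{X^{2^k}}\Bigr)^{-1}.
\]
The leading term of the right side is $c_0X^{-2^k}$, and all further terms are of order at most $X^{-2^k-1}$, because the bracketed correction is $O(X^{2^{k-1}-2^k})$. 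Comparing coefficients of $X^{-2^k}$, that is $m=2^k-1$, yields
\[
\sum_{x\in A}x^{2^k-1}=c_0=\prod_{0\ne v\in V}v\neq 0.
\]
This holds regardless of $a$, which proves the statement.

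The edge cases need a separate look. For $k=0$ the set $A$ is a single point $\{a\}$ and $f(x)=x^0=1$, so the sum is $1$; the formula agrees with an empty product. For $k=1$ the set is $\{a,a+v\}$ with $v\neq 0$, and the sum is $a^1+(a+v)^1=v\neq 0$.

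The only real care needed is the coefficient comparison, namely checking that no lower-order corrections reach the exponent $X^{-2^k}$; this is immediate because $2^k-2^{k-1}\ge 1$.

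An alternative route computes $\sum_{x\in V}(a+x)^{2^k-1}$ directly: expanding $(a+x)^{2^k-1}=\sum_{j}a^{2^k-1-j}x^j$ by Lucas' theorem, only the term with $j=2^k-1$ survives, and $\sum_{x\in V}x^{2^k-1}$ equals the Moore determinant ratio, which is nonzero. The generating-function argument above avoids that determinant computation.
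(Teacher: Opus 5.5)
Your proof is correct, and it reaches the same closed form as the paper, $\sum_{x\in A}x^{2^k-1}=\prod_{0\ne v\in V}v$, by a genuinely different route.

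The paper obtains Theorem~\ref{T1.1} as the case $q=2$ of Theorem~\ref{T2.3}, and splits the work into two lemmas:
\begin{itemize}
\item Lemma~\ref{L2.1} removes the translate. It does this for every exponent $m$ with $\wt_q(m)\le k(q-1)$, using the multinomial expansion and the fact that $\sum_{a\in\f_q}a^i$ vanishes unless $0<i\equiv 0\pmod{q-1}$.
\item Lemma~\ref{L2.2} proves, as an identity in indeterminates, that $\sum_{a\in\f_q^k}(a_1X_1+\cdots+a_kX_k)^{q^k-1}$ equals the product of all nonzero linear forms. It uses Hasse derivatives to show each linear form divides the sum, then a degree count, then one coefficient comparison to pin down the constant.
\end{itemize}

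You instead use the additivity of the subspace polynomial. Since $\prod_{x\in A}(X-x)=L_V(X)-L_V(a)$ has constant derivative $c_0$, you can read the power sums off the expansion of $c_0/(L_V(X)-L_V(a))$ at infinity. This handles the translation and the evaluation in one stroke. It also gives $p_m=0$ for $m<2^k-1$ as a by-product, which is exactly what your sketched alternative route would need. Your argument carries over verbatim to $\f_q$, because there too $L_V'=c_0=\prod_{0\ne v\in V}(-v)=\prod_{0\ne v\in V}v$. So it yields Theorem~\ref{T2.3} directly and is shorter and more self-contained than the paper's proof.

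What you give up is the paper's broader machinery. Your expansion only shows that $L_V(a)$ does not affect $p_m$ for $m<2^{k+1}-1$. So you do not recover the general translation invariance of Lemma~\ref{L2.1}, which the paper reuses later, for example to reduce to linear subspaces when treating the Welch function $X^{2^m+3}$. Likewise, it is the Hasse-derivative argument of Lemma~\ref{L2.2} that extends to the exponents $(q-1)(1+q^l+\cdots+q^{(k-1)l})$ in Proposition~\ref{PA3}.
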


Let $\fin:\f_{2^n}\to\f_{2^n}$ be the multiplicative inverse function defined as $\fin(x)=x^{-1}$ for $x\in\f_{2^n}^*$ and $\fin(0)=0$. It is well known that $\fin$ is 2nd order sum-free, equivalently, $(n-2)$-th order sum-free, if and only if $n$ is odd \cite{Carlet-DCC-2024, Nyberg-LNCS-1994}. It was conjectured by Carlet that $\fin$ is not $k$th order sum-free for $3\le k\le n-3$ \cite{Carlet-DCC-2024}. This conjecture has been a driving force for the recent works on sum-free functions. Carlet's conjecture has been confirmed when $n$ is not a prime \cite{Hou-Zhao-DCC-2026}. The conjecture is also known to be true when $k/n$ is not close to 0 or 1 \cite{Carlet-Hou-DCC-2025, Hou-Zhao-FFA-2026}.

\begin{thm}\label{T1.2}
\cite[Theorem~ 5.1]{Hou-Zhao-DCC-2026} Assume $3\le k\le n-3$. If
\[
n\ge \frac{13}3k-1.37\quad\text{or}\quad n\le\frac{13}{10}k+0.41,
\]
$\fin$ is not $k$th order sum-free on $\f_{2^n}$.
\end{thm}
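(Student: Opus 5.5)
The plan is to reduce the statement to the known expression for the power sum of $x^{-1}$ over an affine subspace, and then show that an affine subspace making that expression vanish exists, by a point-counting argument on a suitable variety via the Lang--Weil bound.

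\emph{Step 1: reduce to a polynomial identity.} For a $k$-dimensional affine subspace $A=a+V$ with $a\notin V$, write $L_V(X)=\prod_{v\in V}(X-v)$, the subspace polynomial of $V$. Since $\sum_{x\in A}x^{-1}$ is a logarithmic derivative, we get
\[
\sum_{x\in A}\frac1x=\frac{L_V'(a)}{L_V(a)}=\frac{c_1(V)}{L_V(a)},
\]
where $c_1(V)$ is the coefficient of $X$ in $L_V$, i.e.\ the product of the nonzero elements of $V$ up to sign. This is nonzero when $a\notin V$. So the nontrivial cases are those where $A$ contains $0$, or where $A$ is a linear subspace containing $0$, for which $\sum_{x\in V\setminus\{0\}} x^{-1}$ equals the coefficient of $X^2$ in $L_V$ divided by $c_1(V)$. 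Sum-freedom therefore fails as soon as some $k$-dimensional linear subspace $V$ has a subspace polynomial $L_V(X)=\sum_{i=0}^k c_iX^{2^i}$ with $c_1=0$. Equivalently, via the Dickson-matrix or Moore-determinant description of the coefficients, we need some $k$-dimensional $V$ whose Moore-type determinant that omits the column $X^{2}$ vanishes.

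\emph{Step 2: count subspaces with $c_1=0$.} I would parametrize $V$ by a basis $v_1,\dots,v_k\in\f_{2^n}$ and regard the condition $c_1(v_1,\dots,v_k)=0$ as a hypersurface over $\f_{2^n}$ in $k$ variables, of degree roughly $2^{k}$. Alternatively, one can use the duality $k\leftrightarrow n-k$, since $V^\perp$ under the trace form has a related polynomial, and work with whichever of $k$ and $n-k$ is smaller. Lang--Weil then gives $2^{n(k-1)}+O(D^2 2^{n(k-3/2)})$ points, with $D\approx 2^k$. Next I would subtract the degenerate points, namely those where $v_1,\dots,v_k$ are linearly dependent over $\f_2$, which form a union of $2^k$-ish hyperplanes contributing $O(2^k2^{n(k-1)})$. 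This step needs the hypersurface to have an absolutely irreducible component defined over $\f_{2^n}$. I would try to establish that by specializing variables down to a curve and checking irreducibility there.

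\emph{Main obstacle.} Balancing these error terms is the crux. The condition $D^2\ll 2^{n/2}$ gives $n\gtrsim 4k$, and the precise constant $\tfrac{13}{3}$ together with the linear shift $-1.37$ must come from optimized degree bounds and explicit Lang--Weil constants. The range $n\le \tfrac{13}{10}k+0.41$ should then follow from the dual version applied to $n-k$. The identity $\tfrac{13}{3}(n-k)\le n$ gives $n\ge \tfrac{13}{10}k$ up to constants, which is consistent with this. Proving absolute irreducibility of the $c_1=0$ variety, and also excluding the degenerate locus in the count, is where I expect the difficulty to lie. For that I would first try a specialization argument combined with Eisenstein-type irreducibility in one variable.
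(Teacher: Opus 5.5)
Your overall architecture matches the paper's. The paper only quotes this theorem, but it proves the $q$-ary analogue, Theorem~\ref{T3.3}, by this route. Your coefficient of $X^2$ in $L_V$ is $F_k=\Delta_1/\Delta$, and one needs a zero of $F_k$ off $V(\Delta)$. Lang--Weil supplies many zeros, and the $k\leftrightarrow n-k$ duality gives the second range.

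However, your handling of the degenerate locus fails. The $\f_2$-dependent tuples fill $2^k-1$ hyperplanes, roughly $(2^k-1)2^{n(k-1)}$ points. That exceeds the Lang--Weil main term $2^{n(k-1)}$, so subtracting an $O(2^k2^{n(k-1)})$ quantity leaves nothing, for every $n$. You must bound only $V(F_k)\cap V(\Delta)$. The missing fact is that $F_k$ vanishes identically on none of these hyperplanes, i.e.\ $\gcd(F_k,\Delta)=1$. Then the intersection has codimension $2$ and at most $\max\{\deg F_k,\deg\Delta\}^2\,2^{n(k-2)}\le 2^{2k}2^{n(k-2)}$ points, as in \eqref{eq:Bezout}. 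The paper gets coprimality from square-freeness of $\Delta_1=F_k\Delta$. That holds because $\partial\Delta_1/\partial X_i=\Delta(X_1,\dots,\widehat{X_i},\dots,X_k)^{4}$ is nonzero and free of $X_i$.

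Second, you rightly call absolute irreducibility the crux, but you leave it as an unexecuted plan. Specializing to a curve would need, for every $k$, an explicit degree-preserving specialization that is provably absolutely irreducible, and you offer none. The idea that works is induction on $k$ through the lowest-degree homogeneous part. After setting $X_k=1$, the lowest homogeneous component of $F_k$ is exactly $F_{k-1}(X_1,\dots,X_{k-1})$. By induction it has an absolutely irreducible factor, occurring only to the first power (square-freeness of $\Delta_1$ again). The lifting lemma used in Lemma~\ref{L3.2} then yields an absolutely irreducible factor of $F_k$ over $\f_2$ itself. The base case $k=3$ (Lemma~\ref{L3.1}) is an Eisenstein-type argument on the homogeneous components of $F_3(X_1,X_2,1)$. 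Its lowest component $X_1^2+X_1X_2+X_2^2$ is separable, divides all intermediate components, and is coprime to the top one.

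Finally, you never actually derive the constants. Your error term omits the $5\delta^{13/3}2^{n(k-2)}$ term of the explicit Lang--Weil bound. That term produces $\frac{13}{3}$; the condition $D^2\ll 2^{n/2}$ gives only $4k$. Even the paper's own computation at $q=2$ yields only $n\ge\frac{13}{3}k+2\log_2\frac12(1+\sqrt{21})\approx\frac{13}{3}k+2.96$. So the stated shifts $-1.37$ and $+0.41$, which the paper imports from earlier work, need a finer count than anything in your sketch. Two slips: $\frac{13}{3}(n-k)\le n$ is equivalent to $n\le\frac{13}{10}k$, not $n\ge$. And in Step~1 you use $c_1$ both for the coefficient of $X$ and for that of $X^2$.
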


In the present paper, we will generalize Theorems~\ref{T1.1} and \ref{T1.2} to the $q$-ary case; see Theorems~\ref{T2.3} and \ref{T3.3}, respectively. Generalizations of binary results to the $q$-ary setting are not always straightforward. In the two generalizations considered here, although the proofs follow the same ideas as in the binary case, additional techniques are needed. For Theorem~\ref{T2.3}, we first prove a useful identity which does not seem to be well known. Moreover, Theorem~\ref{T2.3} is further extended to give a family of $k$th order sum-free functions. When proving Theorem~\ref{T3.3} using the Lang-Weil bound, we show by induction that a certain polynomial has an absolutely irreducible factor.

APN functions can be characterized in terms of a subcode of a Reed--Muller code \cite{Carlet-2021, Carlet-Charpin-Zinoviev-DCC-1998}. We will generalize this characterization to sum-free functions of any order over any finite field (Theorem~\ref{t4.6}). The proof of this result critically depends a theorem by Delsarte, Goethals and MacWilliams that determines the minimum weight codewords of the Reed-Muller code.

The Welch function on $\f_{2^n}$, where $n=2m+1$, is defined by $W_n(X)=X^{2^m+3}$. It is known that the Welch function is APN, i.e., 2nd order sum-free \cite{Dobbertin-IEEE-IT-1999}. In fact, it is among a handful of well-known power APN functions \cite{Dobbertin-2001}. Section~\ref{sec5} of the present paper is devoted to a study of sum-freedom of the Welch function. We give an alternative proof for the 2nd sum-freedom of the Welch function. This proof leads to a general algebraic question and our proof can be viewed as an answer to that question in a very special case. It is not known whether $W_n(X)$ is 3rd order sum-free for $m\ge 3$. We provide a necessary and sufficient condition for $W_n(X)$ not to be 3rd order sum-free in terms of a subdeterminant of a Dickson matrix. Based on computer experiments, we conjecture that $W_n(X)$ is not 3rd order sum-free for $m\ge 3$. We also extend our discussion from the Welch function to power functions of algebraic degree 3.

\section{A Generalization of Theorem~\ref{T1.1}}\label{sec2}

\subsection{A $q$-ary version of Theorem~\ref{T1.1}}\

For a nonnegative integer $m$ with base $q$ representation
\[
m=m_0q^0+m_1q^1+\cdots,\quad 0\le m_i\le q-1,
\]
its base $q$ weight is defined as $\wt_q(m)=m_0+m_1+\cdots$.

\begin{lem}\label{L2.1}
Let $m$ be a positive integer such that $\wt_q(m)\le k(q-1)$. Then
\[
\sum_{a_1,\dots,a_k\in\f_q}(Y+a_1X_1+\cdots+a_kX_k)^m=\sum_{a_1,\dots,a_k\in\f_q}(a_1X_1+\cdots+a_kX_k)^m.
\]
\end{lem}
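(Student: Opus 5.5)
Write $L=a_1X_1+\cdots+a_kX_k$. The plan is to expand $(Y+L)^m$ using the base-$q$ digits of $m$ and show that every term with a positive power of $Y$ vanishes after summing over $(a_1,\dots,a_k)\in\f_q^k$. Write $m=\sum_i m_iq^i$. In characteristic $p$ the Frobenius gives $(Y+L)^{q^i}=Y^{q^i}+L^{q^i}$, so
\[
(Y+L)^m=\prod_i (Y^{q^i}+L^{q^i})^{m_i}=\sum_{0\le j_i\le m_i}\Bigl(\prod_i\binom{m_i}{j_i}\Bigr)Y^{\sum_i (m_i-j_i)q^i}L^{\sum_i j_iq^i}.
\]
The term with $j_i=m_i$ for all $i$ is $L^m$, and it gives the right-hand side. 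It therefore suffices to show that for every tuple $(j_i)\neq(m_i)$ we have $\sum_{a\in\f_q^k}L^{e}=0$, where $e=\sum_i j_iq^i$. The key point is that $\wt_q(e)=\sum_i j_i<\sum_i m_i=\wt_q(m)\le k(q-1)$.

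It remains to prove the classical fact that $\sum_{a\in\f_q^k}(a_1X_1+\cdots+a_kX_k)^e=0$ whenever $\wt_q(e)<k(q-1)$; this includes $e=0$, where the sum is $q^k=0$. Write $e=\sum_i e_iq^i$ and expand again by Frobenius and the multinomial theorem:
\[
L^e=\prod_i\Bigl(\sum_{t=1}^k a_t^{q^i}X_t^{q^i}\Bigr)^{e_i}.
\]
Expand each factor multinomially. Since $a_t\in\f_q$, we have $a_t^{q^i}=a_t$, so each monomial carries a coefficient of the form $\prod_t a_t^{c_t}$. Here $c_t$ is the total multiplicity with which index $t$ is chosen across all digits, so $\sum_t c_t=\sum_i e_i=\wt_q(e)$. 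The sum then factors as $\prod_t\sum_{a_t\in\f_q}a_t^{c_t}$. We use that $\sum_{a\in\f_q}a^c$ equals $-1$ if $c>0$ and $(q-1)\mid c$, and equals $0$ otherwise (including $c=0$, where it is $q=0$). A nonzero contribution would therefore need every $c_t$ to be a positive multiple of $q-1$, which forces $\sum_t c_t\ge k(q-1)$. That contradicts $\wt_q(e)<k(q-1)$, so every term vanishes.

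The main thing to check carefully is the bookkeeping in this last step: that reducing exponents via $a^{q^i}=a$ legitimately gives a total exponent sum equal to $\wt_q(e)$. This holds because the multinomial expansion is done digit by digit before collapsing the powers of $a_t$, and multinomial coefficients that vanish mod $p$ only help. Everything else is routine, and the strict inequality $\wt_q(e)<\wt_q(m)$ coming from $(j_i)\ne(m_i)$ is exactly what makes the hypothesis $\wt_q(m)\le k(q-1)$ suffice.
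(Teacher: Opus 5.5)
Your argument is correct, but it is organized differently from the paper's.

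The paper makes a single multinomial expansion of $(Y+a_1X_1+\cdots+a_kX_k)^m$ with $k+1$ parts. Summing over the $a_j$ keeps only the terms with $0<i_j\equiv 0\pmod{q-1}$ for $1\le j\le k$. It then uses the Lucas--Kummer criterion: a nonzero multinomial coefficient forces $i_0+i_1+\cdots+i_k$ to have no carries in base $p$, hence none in base $q$, so $\wt_q(m)=\sum_{j}\wt_q(i_j)$. Since $\wt_q(i_j)\ge q-1$ for $j\ge 1$ (tacitly using $\wt_q(i)\equiv i\pmod{q-1}$), the hypothesis forces $i_0=0$.

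You instead separate $Y$ from $L$ digit by digit via the Frobenius. This reduces everything to the standalone fact that $\sum_{a\in\f_q^k}(a_1X_1+\cdots+a_kX_k)^e=0$ when $\wt_q(e)<k(q-1)$. You prove that fact by a second digit-wise expansion in which $a_t^{q^i}=a_t$ makes the exponent of $a_t$ literally a weight.

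Your Frobenius splitting is in effect a built-in proof of Lucas's theorem. So your version is more self-contained, needing neither the carry criterion nor the weight congruence. It also isolates as a lemma essentially the vanishing statement of Remark~\ref{R2.4}, which the paper only extracts afterwards from its proof. The paper's version is shorter because it treats $Y$ and the $X_j$ uniformly in one expansion, at the cost of quoting those number-theoretic facts.
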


\begin{proof}
We have
\begin{align*}
&\sum_{a_1,\dots,a_k\in\f_q}(Y+a_1X_1+\cdots+a_kX_k)^m\cr
=\,&\sum_{a_1,\dots,a_k\in\f_q}\sum_{i_0,i_1,\dots,i_k}\binom m{i_0,i_1,\dots,i_k}Y^{i_0}(a_1X_1)^{i_1}\cdots(a_kX_k)^{i_k}\cr
=\,&\sum_{i_0,i_1,\dots,i_k}\binom m{i_0,i_1,\dots,i_k}Y^{i_0}X_1^{i_1}\cdots X_k^{i_k}\Bigl(\sum_{a_1\in\f_q}a_1^{i_1}\Bigr)\cdots\Bigl(\sum_{a_k\in\f_q}a_k^{i_k}\Bigr)\cr
=\,&(-1)^k\sum_{\substack{i_0,i_1,\dots,i_k\cr 0<i_j\equiv0\,(\text{mod}\, q-1),\,1\le j\le k}}\binom m{i_0,i_1,\dots,i_k}Y^{i_0}X_1^{i_1}\cdots X_k^{i_k}.
\end{align*}
In the above, $\binom m{i_0,i_1,\dots,i_k}\ne 0$ if and only if the same sum $i_0+i_1+\cdots+i_k$ has no carries in base $p$ ($=\ch \f_q$), hence only if $i_0+i_1+\cdots+i_k$ has no carries in base $q$. When $i_0+i_1+\cdots+i_k$ ($=m$) has no carries in base $q$, where $0<i_j\equiv 0\pmod{q-1}$ for $1\le j\le k$ and $\wt_q(m)\le k(q-1)$, we have $i_1+\cdots+i_k=m$, whence $i_0=0$. Therefore,
\[
\sum_{a_1,\dots,a_k\in\f_q}(Y+a_1X_1+\cdots+a_kX_k)^m=\sum_{a_1,\dots,a_k\in\f_q}(a_1X_1+\cdots+a_kX_k)^m.
\]
\end{proof}

\begin{lem}\label{L2.2}
Let $k$ be a positive integer. Then
\[
\sum_{a_1,\dots,a_k\in\f_q}(a_1X_1+\cdots+a_kX_k)^{q^k-1}=\prod_{0\ne(a_1,\dots,a_k)\in\f_q^k}(a_1X_1+\cdots+a_kX_k).
\]
\end{lem}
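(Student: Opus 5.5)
The plan is to show that the left-hand side, call it $S_k(X_1,\dots,X_k)$, is divisible by the right-hand side $P_k$. Both are homogeneous of degree $q^k-1$, so they agree up to a scalar, and I will then fix the scalar by induction on $k$.

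First I will rewrite $P_k$. The nonzero vectors of $\f_q^k$ split into $(q^k-1)/(q-1)$ lines. For a line with representative linear form $L=a_1X_1+\cdots+a_kX_k$, we have $\prod_{c\in\f_q^*}cL=-L^{q-1}$. Hence $P_k=\pm\prod_L L^{q-1}$, the product taken over one representative $L$ of each line. Distinct lines give non-associate linear forms, and these are pairwise coprime primes in the UFD $\f_q[X_1,\dots,X_k]$. So it suffices to show that $L^{q-1}\mid S_k$ for every nonzero linear form $L$.

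Next, $S_k$ is invariant under every substitution $X\mapsto XM$ with $M\in GL_k(\f_q)$, because such a substitution just permutes the vectors $a$ in the sum. Since $GL_k(\f_q)$ acts transitively on nonzero linear forms, it is enough to treat $L=X_k$.

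Write $S_k=\sum_{a_k}\sum_{a'}(a'\cdot X'+a_kX_k)^{q^k-1}$ and expand by the binomial theorem in powers of $X_k$. The coefficient of $X_k^j$ carries the factor $\sum_{a_k\in\f_q}a_k^j$. This factor vanishes for $0\le j<q-1$: for $j=0$ it equals $q=0$, and for $0<j<q-1$ it is the usual power sum, which is $0$. Therefore $X_k^{q-1}\mid S_k$, and so $P_k\mid S_k$. By degree count, $S_k=c_kP_k$ for some $c_k\in\f_q$.

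To show $c_k=1$, I will induct on $k$. The base case $k=1$ is direct: both sides equal $-X_1^{q-1}$. For the inductive step, compare the coefficients of $X_k^{(q-1)q^{k-1}}$, which is the top $X_k$-degree of $P_k$.

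On the right, the factors with $a_k=0$ contribute $P_{k-1}$. The factors with $a_k\ne0$ contribute leading coefficient $\prod_{a_k\ne0}a_k^{q^{k-1}}=(-1)^{q^{k-1}}=-1$ in characteristic $p$. So the right side gives $-P_{k-1}$.

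On the left, the coefficient is
\[
\binom{q^k-1}{(q-1)q^{k-1}}\sum_{a'}(a'\cdot X')^{q^{k-1}-1}\sum_{a_k}a_k^{(q-1)q^{k-1}}.
\]
By Lucas' theorem the binomial coefficient is $1$. The last sum is $-1$. So the left side gives $-S_{k-1}$, which equals $-P_{k-1}$ by induction, and hence $c_k=1$.

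I expect the main care to be needed in the multiplicity argument. Divisibility of $S_k$ by each $L$ alone only gives the product over lines, of degree $(q^k-1)/(q-1)$. One genuinely needs the $(q-1)$-fold vanishing from the power sums, combined with the $GL_k$-transitivity, to reach full degree $q^k-1$.
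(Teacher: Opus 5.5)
Your proof is correct and follows the same strategy as the paper. Both arguments show that each nonzero linear form $L$ divides the sum with multiplicity $q-1$ because $\sum_{a\in\f_q}a^j=0$ for $0\le j<q-1$, use a degree count to get the sum as $c\prod_{0\ne a}(a_1X_1+\cdots+a_kX_k)$, and then normalize $c$. The differences are only in execution: you reduce to $L=X_k$ via $GL_k(\f_q)$-invariance where the paper uses Hasse derivatives at $X_1=-(b_2X_2+\cdots+b_kX_k)$ plus symmetry, and you fix $c$ by induction on the leading $X_k$-coefficient where the paper compares the coefficient of the single monomial $X_1^{(q-1)q^{k-1}}\cdots X_k^{q-1}$.
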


\begin{proof}
Let
\[
F(X_1,\dots,X_k)=\sum_{a_1,\dots,a_k\in\f_q}(a_1X_1+\cdots+a_kX_k)^{q^k-1},
\]
and treat it as a polynomial in $X_1$ over $\f_q[X_2,\dots,X_k]$. Let $\partial^iF$ denote the $i$th Hasse derivative of $F$ with respect to $X_1$ \cite[p.28]{Goldschmidt-2003}. For $0\le i<q-1$,
\[
\partial^iF=\binom{q^k-1}i\sum_{a_1,\dots,a_k\in\f_q}(a_1X_1+\cdots+a_kX_k)^{q^k-1-i}a_1^i.
\]
For $(b_2,\dots,b_k)\in\f_q^{k-1}$, we have
\begin{align*}
&\partial^iF\Bigm|_{X_1=-(b_2X_2+\cdots+b_kX_k)}\cr
=\,&\binom{q^k-1}i\sum_{a_1,\dots,a_k\in\f_q}\bigl((a_2-a_1b_2)X_2+\cdots+(a_k-a_1b_k)X_k\bigr)^{q^k-1-i}a_1^i\cr
=\,&\binom{q^k-1}i\sum_{a_2,\dots,a_k\in\f_q}(a_2X_2+\cdots+a_kX_k)^{q^k-1-i}\sum_{a_1\in\f_q}a_1^i\cr
=\,&0.
\end{align*}
Thus $-(b_2X_2+\cdots+b_kX_k)$ is a root of $F$ with multiplicity $\ge q-1$ \cite[Corollary~2.5.14]{Goldschmidt-2003}. Therefore
\begin{equation}\label{eq:|F}
\prod_{(a_1,\dots,a_k)\sim(1,b_2,\dots,b_k)}(a_1X_1+\cdots+a_kX_k)\mid F(X_1,\dots,X_k),
\end{equation}
where $(a_1,\dots,a_k)\sim(1,b_2,\dots,b_k)$ means that $(a_1,\dots,a_k)=u(1,b_2,\dots,b_k)$ for some $u\in\f_q^*$. Since $F(X_1,\dots,X_k)$ is symmetric in $X_1,\dots,X_k$, \eqref{eq:|F} implies that
\[
\prod_{0\ne(a_1,\dots,a_k)\in\f_q^k}(a_1X_1+\cdots+a_kX_k)\mid F(X_1,\dots,X_k).
\]
Comparing the degrees on both sides gives
\begin{equation}\label{eq:c}
F(X_1,\dots,X_k)=c\prod_{0\ne(a_1,\dots,a_k)\in\f_q^k}(a_1X_1+\cdots+a_kX_k)
\end{equation}
for some $c\in\f_q$. The coefficient of $X_1^{(q-1)q^{k-1}}X_2^{(q-1)q^{k-2}}\cdots X_k^{(q-1)q^0}$ on the RHS of \eqref{eq:c} is
\[
c\Bigl(\prod_{a_1\in\f_q^*}a_1\Bigr)^{q^{k-1}}\cdots\Bigl(\prod_{a_k\in\f_q^*}a_k\Bigr)^{q^0}=c(-1)^k.
\]
The coefficient of $X_1^{(q-1)q^{k-1}}X_2^{(q-1)q^{k-2}}\cdots X_k^{(q-1)q^0}$ in $F(X_1,\dots,X_k)$ is 
\begin{align*}
&\sum_{a_1,\dots,a_k\in\f_q}\binom{q^k-1}{(q-1)q^{k-1},\dots,(q-1)q^0}a_1^{(q-1)q^{k-1}}\cdots a_k^{(q-1)q^0}\cr
=\,&\Bigl(\sum_{a_1\in\f_q}a_1^{q-1}\Bigr)^{q^{k-1}}\cdots\Bigl(\sum_{a_1\in\f_q}a_1^{q-1}\Bigr)^{q^0}=(-1)^k.
\end{align*}
Hence $c=1$.
\end{proof} 

\begin{thm}\label{T2.3} ($q$-ary version of Theorem~\ref{T1.1}) For $0\le k\le n$, $f(X)=X^{q^k-1}$ is $k$th order sum-free on $\f_{q^n}$.
\end{thm}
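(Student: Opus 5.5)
A $k$-dimensional $\f_q$-affine subspace $A$ of $\f_{q^n}$ can be written as $A=\{y+a_1x_1+\cdots+a_kx_k: a_1,\dots,a_k\in\f_q\}$, where $y\in\f_{q^n}$ and $x_1,\dots,x_k\in\f_{q^n}$ are $\f_q$-linearly independent. The plan is to evaluate $\sum_{x\in A}x^{q^k-1}$ in closed form using Lemmas~\ref{L2.1} and \ref{L2.2} and then show the closed form is nonzero.

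First, since $\wt_q(q^k-1)=k(q-1)$, the hypothesis of Lemma~\ref{L2.1} holds with $m=q^k-1$ (for $k\ge1$). Applying Lemma~\ref{L2.1} as a polynomial identity and specializing $Y=y$, $X_i=x_i$ gives
\[
\sum_{x\in A}x^{q^k-1}=\sum_{a_1,\dots,a_k\in\f_q}(a_1x_1+\cdots+a_kx_k)^{q^k-1}.
\]
Lemma~\ref{L2.2} then identifies the right side with
\[
\prod_{0\ne(a_1,\dots,a_k)\in\f_q^k}(a_1x_1+\cdots+a_kx_k).
\]
Because $x_1,\dots,x_k$ are linearly independent over $\f_q$, each factor $a_1x_1+\cdots+a_kx_k$ with $(a_1,\dots,a_k)\ne0$ is nonzero. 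Hence the product is nonzero, which gives \eqref{1.1}.

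The edge case $k=0$ needs separate treatment. Here $A=\{y\}$ and $f(y)=y^0$. Under the convention $0^0=1$, so that $X^{q^0-1}$ is the constant function $1$, the sum is $1\ne0$.

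The key steps were already handled by the lemmas, so the only points needing care are the weight computation and the justification for specializing the polynomial identities at elements of $\f_{q^n}$. The latter is immediate, since both are identities in $\f_q[Y,X_1,\dots,X_k]$.
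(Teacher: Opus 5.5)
Your proposal is correct and follows the paper's proof: Lemma~\ref{L2.1} (applicable since $\wt_q(q^k-1)=k(q-1)$) removes the translate $y$, and Lemma~\ref{L2.2} turns the sum into $\prod_{0\ne(a_1,\dots,a_k)\in\f_q^k}(a_1x_1+\cdots+a_kx_k)$, which is nonzero by linear independence. Your separate treatment of $k=0$ (with $X^0\equiv 1$) is a small addition; the paper leaves that case implicit.
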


\begin{proof}
Let $A$ be a $k$-dimensional $\f_q$-affine subspace of $\f_{q^n}$, and write $A=y+\langle u_1,\dots,u_k\rangle$, where $y\in\f_{q^n}$ and $u_1,\dots,u_k\in\f_{q^n}$ are linearly independent over $\f_q$. By Lemmas~\ref{L2.1} and \ref{L2.2},
\begin{align*}
\sum_{x\in A}f(x)\,&=\sum_{a_1,\dots,a_k\in\f_q}(y+a_1u_1+\cdots+a_ku_k)^{q^k-1}\cr
&=\prod_{0\ne(a_1,\dots,a_k)\in\f_q^n}(a_1u_1+\cdots+a_ku_k)\cr
&\ne 0.
\end{align*}
\end{proof}

\begin{rmk}\label{R2.4}\rm
If $\wt_q(m)<k(q-1)$, it follows from the proof of Lemma~\ref{L2.1} that 
\[
\sum_{a_1,\dots,a_k\in\f_q}(Y+a_1X_1+\cdots+a_kX_k)^m=0.
\]
Hence every function $f:\f_{q^n}\to\f_{q^n}$ with algebraic degree $<k(q-1)$ is not $k$th order sum-free. (The algebraic degree of $f$ is the largest base $q$ weight of the degrees of the monomials in $f$.)
\end{rmk}

\subsection{An extension of Theorem~\ref{T2.3}}\

First, we prove an extension of Lemma~\ref{L2.2}.

\begin{prop}\label{PA3}
Let $q$ be a prime power and $k, l$ be positive integers. Then
\begin{align}\label{eq:5}
&\Bigl(\sum_{a_1,\dots,a_k\in\f_q}(a_1X_1+\cdots+a_kX_k)^{(q-1)(1+q^l+q^{2l}+\cdots+q^{(k-1)l})} \Bigr)^{(q^l-1)/(q-1)}\\
&=\prod_{0\ne(b_1,\dots,b_k)\in\f_{q^l}^k}(b_1X_1+\cdots+b_kX_k).\nonumber
\end{align}
\end{prop}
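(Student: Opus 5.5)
The plan is to reduce \eqref{eq:5} to a Moore-determinant identity for the $q^l$-Frobenius. Write $N=1+q^l+\cdots+q^{(k-1)l}$ and let $G(X_1,\dots,X_k)$ be the inner sum on the left of \eqref{eq:5}. A degree count shows both sides of \eqref{eq:5} are homogeneous of the same degree: $(q-1)N\cdot\frac{q^l-1}{q-1}=(q^l-1)N=q^{kl}-1$ on the left, and $|\f_{q^l}^k\setminus\{0\}|=q^{kl}-1$ linear factors on the right.

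\emph{Step 1 (rewrite $G$).} Since $a_i^{q^{jl}}=a_i$ for $a_i\in\f_q$, we have
\[
(a_1X_1+\cdots+a_kX_k)^{(q-1)N}=\prod_{j=0}^{k-1}\bigl(a_1X_1^{q^{jl}}+\cdots+a_kX_k^{q^{jl}}\bigr)^{q-1}.
\]
Hence $G=\sum_{a\in\f_q^k}\prod_{j=0}^{k-1}(a\cdot Y_j)^{q-1}$ with $Y_j=(X_1^{q^{jl}},\dots,X_k^{q^{jl}})$. Expand the product in the $a_i$ and use $\sum_{a\in\f_q}a^e=-1$ if $0<e\equiv0\pmod{q-1}$, and $0$ otherwise. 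The total $a$-degree is $k(q-1)$, so only the monomial $a_1^{q-1}\cdots a_k^{q-1}$ survives, and $G=(-1)^k\,C(Y)$. Here $C(Y)$ is the coefficient of $\prod_i a_i^{q-1}$ in $\prod_j(a\cdot Y_j)^{q-1}$, regarded as a polynomial in the entries of the $k\times k$ matrix $Y=(Y_{ji})$.

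\emph{Step 2 (identify $C(Y)$).} The target is the matrix identity $(-1)^kC(Y)=c\,\det(Y)^{q-1}$ for a nonzero constant $c$, to be pinned down later. For $q=2$ this is permanent $=$ determinant in characteristic 2. In general I would argue as in Lemma~\ref{L2.2}. Both sides are homogeneous of degree $q-1$ in each row $Y_j$ and in each column, so they have the same multidegree. Then show the left side vanishes to order $\ge q-1$ along each hyperplane $Y_1=\sum_{j\ge2}c_jY_j$, first for $c\in\f_q^{k-1}$. This uses Hasse derivatives in the entries of $Y_1$ together with the reindexing $a\mapsto a$ composed with an $\f_q$-linear change, which is legitimate because the $c_j$ lie in $\f_q$. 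The symmetry under $GL_k(\f_q)$ acting on $a$ should then upgrade this to divisibility by $\det(Y)^{q-1}$. I expect this step to be the main obstacle: vanishing is only easy to see along $\f_q$-rational dependencies, while divisibility by $\det(Y)$ requires vanishing along \emph{all} row dependencies. If the direct route stalls, I would instead evaluate the left side at $Y=\mathrm{diag}(t_1,\dots,t_k)\cdot g$ and show it transforms by $\det(g)^{q-1}$, using that $C$ is an $SL_k$-semi-invariant of the right weight. A further fallback is an explicit combinatorial expansion of $\det(Y)^{q-1}=\det(Y)^q/\det(Y)$.

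\emph{Step 3 (Moore determinant and the final comparison).} With $Y_{ji}=X_i^{q^{jl}}$, the determinant $D=\det(Y)$ is the $q^l$-Moore determinant. By the classical Moore identity, $D$ equals a nonzero constant times $\prod_{[b]}(b_1X_1+\cdots+b_kX_k)$, where $[b]$ runs over projective classes of $\f_{q^l}^k\setminus\{0\}$ with leading entry $1$. On the other side, $\prod_{u\in\f_{q^l}^*}(uL)=-L^{q^l-1}$, so the right side of \eqref{eq:5} equals $\pm\prod_{[b]}(b\cdot X)^{q^l-1}=c'D^{q^l-1}$. Also $G^{(q^l-1)/(q-1)}=c''(D^{q-1})^{(q^l-1)/(q-1)}=c''D^{q^l-1}$. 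So the two sides of \eqref{eq:5} agree up to a constant. I would fix that constant by comparing the coefficient of $X_1^{(q^l-1)q^{(k-1)l}}\cdots X_k^{(q^l-1)}$, exactly as at the end of Lemma~\ref{L2.2}. That computation uses $\prod_{b\in\f_{q^l}^*}b=-1$ and $\sum_{a\in\f_q}a^{q-1}=-1$, and it shows the constant is $1$.
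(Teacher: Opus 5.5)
\textbf{There is a genuine gap in Step 2, and the final constant is wrong in odd characteristic.} Your Steps 1 and 3 are otherwise fine. Step 2 carries the whole content of the proposition, and you do not prove it.

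\textbf{Why your route for Step 2 fails.} The upgrade you propose cannot work. The loci $\{Y_1=\sum_{j\ge2}c_jY_j\}$ are not hyperplanes. They are linear subspaces of codimension $k$ in the $k^2$-dimensional space of matrices. For $k\ge2$, the finitely many $\f_q$-rational ones, together with all their $GL_k(\f_q)$-translates, form a proper closed subset of the irreducible hypersurface $\det Y=0$. Vanishing along them, to any order, gives no divisibility by $\det Y$. In Lemma~\ref{L2.2} the divisors are linear forms in $k$ variables, so finitely many rational hyperplanes exhaust the degree; here $\det Y$ is irreducible. Your fallback, that $C$ is an $SL_k$-semi-invariant of weight $\det^{q-1}$, is evident only for $\f_q$-rational column operations (reindexing $a$). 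Over $\overline{\f}_q$ it is equivalent to the identity you are trying to prove.

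\textbf{The missing idea.} It is the one the paper's proof rests on. For a linear form $\ell(a)=\sum_i y_ia_i$ with coefficients in \emph{any} extension of $\f_q$, one has $\ell(a)^q=\sum_i y_i^qa_i$ as functions on $\f_q^k$. Now take a form of degree $k(q-1)$ in $a$ that is a polynomial in only $k-1$ linear forms. Each of its monomials has some exponent $\ge q$, so on $\f_q^k$ it collapses to $a$-degree $\le(k-1)(q-1)$ and sums to $0$ over $\f_q^k$. So vanishing along dependencies is not limited to $\f_q$-rational coefficients.

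The paper applies this to $F$ at $X_1=-(b_2X_2+\cdots+b_kX_k)$ with $b_j\in\f_{q^l}$, where $b_j^{q^{jl}}=b_j$ lets the substitution pass through the Frobenius twists. It then uses Hasse derivatives to get multiplicity $q-1$ and compares degrees. So it needs neither a matrix identity nor Moore's formula.

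\textbf{A short fix within your framework.} The polynomials $C(Y)$ and $\det(Y)^{q-1}$ both have degree $\le q-1$ in every entry $Y_{ji}$, so it suffices that they agree on $M_k(\f_q)$. For $Y\in M_k(\f_q)$, the sum $\sum_{a\in\f_q^k}\prod_j((Ya)_j)^{q-1}$ counts, mod $p$, the $a$ for which $Ya$ has no zero coordinate. That count is $(q-1)^k$ if $Y$ is invertible, and a multiple of $|\ker Y|$ (hence $0$) otherwise. This gives $C(Y)=\det(Y)^{q-1}$ exactly. With it, your argument becomes a genuinely different proof: a general matrix identity specialized to the $q^l$-Moore matrix.

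\textbf{The constant is not $1$ in general.} Put $M=(q^l-1)/(q-1)$ and $N=(q^{kl}-1)/(q^l-1)$. Let $D=\prod_{[b]}(b\cdot X)$ over representatives whose last nonzero entry is $1$; this is exactly the Moore determinant. Then the left side is $\bigl((-1)^kD^{q-1}\bigr)^M=(-1)^{kM}D^{q^l-1}$, and the right side is $(-1)^ND^{q^l-1}$. For odd $q$, $M\equiv l$ and $N\equiv k\pmod 2$, so the two sides differ by $(-1)^{k(l+1)}$.

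For example, take $q=3$, $k=1$, $l=2$. The left side is $(-X_1^2)^4=X_1^8$, while the right side is $\bigl(\prod_{b\in\f_9^*}b\bigr)X_1^8=-X_1^8$.

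A coefficient comparison ``as in Lemma~\ref{L2.2}'' overlooks that the left side is an $M$th power. The paper's final step has the same slip: it takes the left-hand coefficient to be $\sum_a\prod_i a_i^{(q^l-1)q^{(k-i)l}}$. This is harmless for Corollary~\ref{CA2}, which only uses nonvanishing, but the identity holds only up to this sign.
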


\begin{proof}
Let
\[
F(X_1,\dots,X_k)=\sum_{a_1,\dots,a_k\in\f_q}(a_1X_1+\cdots+a_kX_k)^{(q-1)(1+q^l+q^{2l}+\cdots+q^{(k-1)l})},
\]
and treat it as a polynomial in $X_1$ over $\f_q[X_2,\dots,X_k]$. Let $\partial^iF$ be the $i$th Hasse derivative of $F$ with respect to $X_1$. For $0\le i<q-1$,
\[
\partial^iF=\binom{q-1}i \sum_{a_1,\dots,a_k\in\f_q}(a_1X_1+\cdots+a_kX_k)^{(q-1)(1+q^l+q^{2l}+\cdots+q^{(k-1)l})-i}a_1^i.
\]
For $(b_2,\dots,b_k)\in\f_{q^l}^{k-1}$, we have
\begin{align}\label{eq:6}
&\partial^iF\big|_{X_1=-(b_2X_2+\cdots+b_kX_k)}\\
=\,&\binom{q-1}i\sum_{a_1,\dots,a_k\in\f_q}\bigl((a_2-a_1b_2)X_2+\cdots+(a_k-a_1b_k)X_k\bigr)^{(q-1)(1+q^l+\cdots+q^{(k-1)l})-i}a_1^i\cr
=\,&\binom{q-1}i\sum_{a_1,\dots,a_k\in\f_q}\, a_1^i \bigl((a_2-a_1b_2)X_2+\cdots+(a_k-a_1b_k)X_k\bigr)^{q-1-i}\cr
&\kern8em\cdot \prod_{j=1}^{k-1}\bigl((a_2-a_1b_2)X_2^{q^{jl}}+\cdots+(a_k-a_1b_k)X_k^{q^{jl}}\bigr)^{q-1}.\nonumber
\end{align}
In the above,
\begin{align}\label{eq:7}
&a_1^i \bigl((a_2-a_1b_2)X_2+\cdots+(a_k-a_1b_k)X_k\bigr)^{q-1-i}\\
&\cdot\prod_{j=1}^{k-1}\bigl((a_2-a_1b_2)X_2^{q^{jl}}+\cdots+(a_k-a_1b_k)X_k^{q^{jl}}\bigr)^{q-1}\cr
=\,&\sum_{s_2+\cdots+s_k=k(q-1)-i}f_{s_2,\dots,s_k}(a_2-a_1b_2)^{s_2}\cdots(a_k-a_1b_k)^{s_k}a_1^i,\nonumber
\end{align}
where $f_{s_2,\dots,s_k}\in\f_q[X_2,\dots,X_k]$. Since $s_2+\cdots+s_k=k(q-1)-i>(k-1)(q-1)$, there exists $2\le i\le k$ such that $s_i\ge q$. For this $i$,
\[
(a_i-a_1b_i)^{s_i}=(a_i-a_1b_i)^q(a_i-a_1b_i)^{s_i-q}=(a_i-a_1b_i^q)(a_i-a_1b_i)^{s_i-q},
\]
where the total degree in $a_1,a_i$ on the RHS is $s_i-q+1$. Therefore, the product in \eqref{eq:7} can be expressed as a polynomial in $a_1,\dots,a_k$ with total degree $\le s_2+\cdots+s_k-q+1+i=(k-1)(q-1)<(k(q-1)$. Then by \eqref{eq:6}, 
\[
\partial^iF\big|_{X_1=-(b_2X_2+\cdots+b_kX_k)}=0.
\]
Hence $(X_1+b_2X_2+\cdots+b_kX_k)^{q-1}\mid F(X_1,\dots,X_k)$. Since both sides of \eqref{eq:5} are symmetric in $X_1,\dots,X_k$, it follows that the RHS of \eqref{eq:5} divides the LHS. Comparing the total degrees (in $X_1,\dots,X_k$) on the two sides of \eqref{eq:5} gives 
\begin{align}\label{eq:8}
&\Bigl(\sum_{a_1,\dots,a_k\in\f_q}(a_1X_1+\cdots+a_kX_k)^{(q-1)(1+q^l+q^{2l}+\cdots+q^{(k-1)l})} \Bigr)^{(q^l-1)/(q-1)}\\
&=c\prod_{0\ne(b_1,\dots,b_k)\in\f_{q^l}^k}(b_1X_1+\cdots+b_kX_k)\nonumber
\end{align}
for some $c\in\f_{q^l}^*$.

The coefficient of $X_1^{(q^l-1)q^{(k-1)l}}X_2^{(q^l-1)q^{(k-2)l}}\cdots X_k^{(q^l-1)q^{0l}}$ on the RHS of \eqref{eq:8} is
\[
c\Bigl(\prod_{b_1\in\f_{q^l}^*}b_1\Bigr)\cdots\Bigl(\prod_{b_b\in\f_{q^l}^*}b_k\Bigr)=c(-1)^k.
\]
The coefficient of the same term on the LHS of \eqref{eq:8} is 
\begin{align}
&\sum_{a_1,\dots,a_k\in\f_q}a_1^{(q^l-1)q^{(k-1)l}}\cdots a_k^{(q^l-1)q^{0l}}\cr
=\,&\Bigl(\sum_{a_1\in\f_q}a_1^{q^l-1}\Bigr)^{q^{(k-1)l}}\cdots\Bigl(\sum_{a_k\in\f_q}a_k^{q^l-1}\Bigr)^{q^{0l}}=(-1)^k.
\end{align}
Hence $c=1$.
\end{proof}

We believe that the following fact is well known, but we couldn't locate a proper reference.

\begin{lem}\label{LA}
Let $m,n$ be positive integers such that $\gcd(m,n)=1$. If $u_1,\dots,u_k\in\f_{q^m}$ are linearly independent over $\f_q$, then they are also linearly independent over $\f_{q^n}$; that is, $\f_{q^m}$ and $\f_{q^n}$ are linearly disjoint over $\f_q$.
\end{lem}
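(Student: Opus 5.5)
The plan is to reduce the statement to a comparison of field degrees inside the compositum $\f_{q^{mn}}$, which equals $\f_{q^m}\f_{q^n}$ because $\gcd(m,n)=1$. Both $\f_{q^m}$ and $\f_{q^n}$ sit inside $\f_{q^{mn}}$, so linear independence over $\f_{q^n}$ makes sense there. We may assume $u_1,\dots,u_k$ are nonzero and distinct (linear independence over $\f_q$ already forces this). Then we extend them to an $\f_q$-basis $u_1,\dots,u_m$ of $\f_{q^m}$. Any subfamily of a family that is linearly independent over $\f_{q^n}$ is still independent, so it suffices to treat the case $k=m$, where $u_1,\dots,u_m$ is a full basis.

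With a basis in hand, we look at the $\f_{q^n}$-span $V=\f_{q^n}u_1+\cdots+\f_{q^n}u_m\subseteq\f_{q^{mn}}$. The set $V$ contains $\f_{q^n}$, because $1\in\f_{q^m}$ is an $\f_q$-combination of the $u_i$. It also contains $\f_{q^m}$, and it is closed under multiplication by $\f_{q^n}$. I claim $V$ is a ring. Indeed, $u_iu_j\in\f_{q^m}\subseteq V$, so products of elements of $V$ expand into $\f_{q^n}$-combinations of the $u_iu_j$, which lie in $V$. Hence $V$ is a finite integral domain and therefore a subfield of $\f_{q^{mn}}$ containing both $\f_{q^m}$ and $\f_{q^n}$.

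Since $\gcd(m,n)=1$, the smallest subfield containing both has degree $\operatorname{lcm}(m,n)=mn$ over $\f_q$, so $V=\f_{q^{mn}}$. Counting dimensions then gives $\dim_{\f_{q^n}}V=m$. A spanning set of $m$ vectors in an $m$-dimensional space is a basis, so $u_1,\dots,u_m$ are linearly independent over $\f_{q^n}$. By the reduction above, the same holds for $u_1,\dots,u_k$.

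There is no real obstacle here. The only points to state carefully are the closure of $V$ under multiplication, which is what makes it a field, and the identification of the compositum via $\operatorname{lcm}(m,n)=mn$. As an alternative check, one could give the determinant argument: a nontrivial $\f_{q^n}$-relation would, after applying the $\f_q$-Frobenius powers that generate $\mathrm{Gal}(\f_{q^{mn}}/\f_{q^n})\cong\mathrm{Gal}(\f_{q^m}/\f_q)$, contradict the nonvanishing of the Moore determinant of the $u_i$. The dimension count above is shorter, so I would use it.
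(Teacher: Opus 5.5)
Your proof is correct, but it is not the paper's main argument. The paper uses the Moore-matrix rank criterion. The elements $u_1,\dots,u_k$ are independent over $\f_q$ iff the $m\times k$ matrix with rows $(u_1^{q^j},\dots,u_k^{q^j})$, $0\le j<m$, has rank $k$. They are independent over $\f_{q^n}$ iff the matrix with rows $(u_1^{q^{nj}},\dots,u_k^{q^{nj}})$, $0\le j<m$, has rank $k$. Since $u_i\in\f_{q^m}$ and $\gcd(m,n)=1$, the residues $nj \bmod m$ run over all of $0,\dots,m-1$, so the second matrix is a row permutation of the first. That route handles any $k$ directly, with no extension to a basis, but it relies on the cited rank criterion.

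Your argument runs as follows: the $\f_{q^n}$-span of a basis of $\f_{q^m}$ is closed under multiplication, hence a subfield containing $\f_{q^m}$ and $\f_{q^n}$, hence all of $\f_{q^{mn}}$, which has dimension $m$ over $\f_{q^n}$. This is an elementary, self-contained form of the conceptual proof in Remark~\ref{R2.7}. There, surjectivity of the multiplication map $\f_{q^m}\otimes_{\f_q}\f_{q^n}\to\f_{q^{mn}}$ together with a dimension count gives linear disjointness. Your version replaces the tensor product and the textbook exercise on linear disjointness with the simple observation that the span is a ring.

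The Galois ``alternative check'' you sketch at the end is essentially the paper's main proof. It is cleanest after your reduction to $k=m$, where the Moore matrix is square and a determinant makes sense.
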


\begin{proof}
Let $u_1,\dots,u_k\in\f_{q^m}$. By \cite[Lemma~2.30]{Hou-ams-gsm-2018}, $u_1,\dots,u_k$ are linearly independent over $\f_{q^m}$ if and only if
\begin{equation}\label{eq:1}
\rank\left[\begin{matrix}
u_1&\cdots&u_k\cr
u_1^q&\cdots&u_k^q\cr
\vdots&&\vdots\cr
u_1^{q^{m-1}}&\cdots&u_k^{q^{m-1}}
\end{matrix}
\right]=k.
\end{equation}
In the same way, $u_1,\dots,u_k$ are linearly independent over $\f_{q^n}$ if and only if
\begin{equation}\label{eq:2}
\rank\left[\begin{matrix}
u_1&\cdots&u_k\cr
u_1^{q^n}&\cdots&u_k^{q^n}\cr
\vdots&&\vdots\cr
u_1^{q^{n(m-1)}}&\cdots&u_k^{q^{n(m-1)}}
\end{matrix}
\right]=k.
\end{equation}
Since $\gcd(m,n)=1$, the matrix in \eqref{eq:2} is a row permutation of the matrix in \eqref{eq:1}. Hence \eqref{eq:1} and \eqref{eq:2} are equivalent.
\end{proof}

\begin{rmk}\label{R2.7}\rm
A more conceptual proof of Lemma~\ref{LA} is as follows: Put $\f_{q^m}$ and $\f_{q^n}$ in a fixed algebraic closure $\overline\f_q$. The $\f_q$-map $f:\f_{q^m}\otimes_{\f_q}\f_{q^n}\to \f_{q^m}[\f_{q^n}]=\f_{q^{mn}}$ sending $a\otimes b$ to $ab$ is onto. Since $\dim_{\f_q}\f_{q^m}\otimes_{\f_q}\f_{q^n}=mn=\dim_{\f_q}\f_{q^{mn}}$, $f$ is an $\f_q$-isomorphism. By \cite[Exercise~VI.2.1]{Hungerford-1980}, $\f_{q^m}$ and $\f_{q^n}$ are linearly disjoint over $\f_q$.
\end{rmk}

\begin{cor}\label{CA2}
Let $n,k,l$ be positive integers such that $2\le k\le n$. Then 
\[
X^{(q-1)(1+q^l+q^{2l}+\cdots+q^{(k-1)l})}
\]
is $k$th order sum-free if and only if $\gcd(l,n)=1$.
\end{cor}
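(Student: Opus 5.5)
Set $m=(q-1)(1+q^l+\cdots+q^{(k-1)l})$ and $f(X)=X^m$. The plan is to combine Lemma~\ref{L2.1}, Proposition~\ref{PA3} and Lemma~\ref{LA}, exactly as Theorem~\ref{T2.3} was derived from Lemmas~\ref{L2.1} and \ref{L2.2}.

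\emph{Reduction to a linear subspace.} First I check the weight hypothesis of Lemma~\ref{L2.1}. The base $q$ digits of $m$ are $q-1$ in the $k$ positions $0,l,\dots,(k-1)l$, and these do not collide because $l\ge1$. Hence $\wt_q(m)=k(q-1)$. Lemma~\ref{L2.1} then gives, for any $k$-dimensional affine subspace $A=y+\langle u_1,\dots,u_k\rangle$ of $\f_{q^n}$,
\[
\sum_{x\in A}f(x)=\sum_{a_1,\dots,a_k\in\f_q}(a_1u_1+\cdots+a_ku_k)^m=:S(u_1,\dots,u_k).
\]
Raising to the power $(q^l-1)/(q-1)$ and applying Proposition~\ref{PA3} (specializing $X_i=u_i$) gives
\[
S^{(q^l-1)/(q-1)}=\prod_{0\ne(b_1,\dots,b_k)\in\f_{q^l}^k}(b_1u_1+\cdots+b_ku_k),
\]
where the identity is read in $\f_{q^{ln}}$. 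So $S\ne0$ if and only if $u_1,\dots,u_k$ are linearly independent over $\f_{q^l}$ inside $\f_{q^{ln}}$. Thus $f$ is $k$th order sum-free exactly when every $\f_q$-independent $k$-tuple in $\f_{q^n}$ is $\f_{q^l}$-independent.

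\emph{The case $\gcd(l,n)=1$.} This is Lemma~\ref{LA} applied with the pair $(n,l)$ in place of $(m,n)$. Since $\f_{q^n}$ and $\f_{q^l}$ are linearly disjoint over $\f_q$, any $\f_q$-independent $u_i\in\f_{q^n}$ stay independent over $\f_{q^l}$. Hence every $S\ne0$.

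\emph{The case $d=\gcd(l,n)>1$.} Here I will exhibit a bad subspace. Pick $u_1=1$ and $u_2=\omega\in\f_{q^d}\setminus\f_q$, which lie in $\f_{q^n}\cap\f_{q^l}=\f_{q^d}$. Then $u_1,u_2$ are $\f_q$-independent but $\f_{q^l}$-dependent, because $\omega\cdot u_1-1\cdot u_2=0$. Extend them to any $\f_q$-independent set $u_1,\dots,u_k\subset\f_{q^n}$, which is possible since $2\le k\le n$. The product above then contains the zero factor coming from $(b_1,b_2,0,\dots,0)=(\omega,-1,0,\dots,0)$, so $S=0$ and $f$ is not $k$th order sum-free.

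The main point needing care is the reduction step. One must check that Proposition~\ref{PA3}, a polynomial identity over $\f_{q^l}$, can legitimately be specialized to $u_i\in\f_{q^n}$, working in the common field $\f_{q^{ln}}$; this is harmless. One must also note that $S=0$ if and only if its power vanishes, and that the digit count giving $\wt_q(m)=k(q-1)$ is correct. The hypothesis $k\ge2$ is used only to get two vectors in the case $d>1$.
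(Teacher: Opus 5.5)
Your proposal is correct and follows the paper's proof. Lemma~\ref{L2.1} and Proposition~\ref{PA3} reduce $k$th order sum-freedom to $\f_{q^l}$-independence of $\f_q$-independent $k$-tuples in $\f_{q^n}$, Lemma~\ref{LA} settles $\gcd(l,n)=1$, and an $\f_q$-independent but $\f_{q^d}$-dependent tuple settles $\gcd(l,n)=d>1$; your explicit choice $u_1=1$, $u_2=\omega\in\f_{q^d}\setminus\f_q$ just makes the paper's existence claim concrete.

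You only use that $S$ vanishes if and only if the product does, so your argument is unaffected by a sign issue in Proposition~\ref{PA3}. For odd $q$, that identity actually holds only up to the nonzero constant $(-1)^{k(l-1)}$; for example, $k=1$, $q=3$, $l=2$ gives $X^8$ on the left and $-X^8$ on the right.
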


\begin{proof}
($\Leftarrow$) Let $A=y+\langle u_1,\dots,u_k\rangle$ be a $k$-dimensional $\f_q$-affine subspace of $\f_{q^n}$, where $u_1,\dots,u_k\in\f_{q^n}$ are linearly independent over $\f_q$ and $y\in\f_{q^n}$. By Lemma~\ref{LA}, $u_1,\dots,u_k$ are also linearly independent over $\f_{q^l}$. Then by Lemma~\ref{L2.1} and Proposition~\ref{PA3},
\[
\sum_{x\in A}x^{(q-1)(1+q^l+\cdots+q^{(k-1)l})}=\sum_{a_1,\dots,a_k\in\f_q}(a_1u_1+\cdots+a_ku_k)^{(q-1)(1+q^l+\cdots+q^{(k-1)l})}\ne0.
\]

\medskip
($\Rightarrow$) Assume to the contrary that $d=\gcd(l,n)>1$. Then there exist $u_1,\dots,u_k\in\f_{q^n}$ which are linearly independent over $\f_q$ but linearly dependent over $\f_{q^d}$. Then by Proposition~\ref{PA3},
\[
\sum_{a_1,\dots,a_k\in\f_q}(a_1u_1+\cdots+a_ku_k)^{(q-1)(1+q^l+\cdots+q^{(k-1)l})}=0,
\]
which is a contradiction.
\end{proof}

If two integers belong to the same $q$-cyclotomic coset modulo $q^n-1$, i.e, $k_1\equiv q^ik_2\pmod{q^n-1}$ for some $i\ge 0$, then 
\[
X^{k_1}\equiv(X^{k_2})^{q^i}\pmod{X^{2^n}-X},
\]
and hence the power functions $X^{k_1}$ and $X^{k_2}$ on $\f_{q^n}$ have the same orders of sum-freedom. This fact, together with Corollary~\ref{CA2}, gives a family of $k$th order sum-free functions. Two multi subsets $I_1$ and $I_2$ of $\Bbb Z_n$ are said to be {\em equivalent}, denoted as $I_1\overset n\sim I_2$, if there is an invertible affine polynomial $\alpha(X)\in\Bbb Z[X]$, i.e., $\alpha(X)=aX+b$, where $a\in\Bbb Z_n^\times$ and $b\in\Bbb Z_n$, such that $\alpha(I_1)=I_2$. The family of $k$th order sum-free functions on $\f_{q^n}$ is
\[
\bigl\{X^{(q-1)(q^{i_1}+\cdots+q^{i_k})}:\{i_1,\dots,i_k\}\overset n\sim\{0,1,\dots,k-1\}\bigr\}.
\]
This family is interesting because the general belief is that for $3\le k\le n-2$, the $k$th order sum-free functions are rare.

For example, when $q=2$, $k=3$ and $n=5$,
\[
\{0,1,3\}=\alpha(\{0,1,2\}),\quad \{0,2,3\}=\beta(\{0,1,2\}),
\]
where $\alpha(X=3X$ and $\beta(X)=2X+3$ are invertible affine polynomials in $\Bbb Z_5[X]$. Therefore, both $X^{2^0+2^1+2^3}=X^{11}$ and $X^{2^0+2^2+2^3}=X^{13}$ are 3rd order sum-free on $\f_{2^5}$. The 3rd order sum-freedom of $X^{13}$ on $\f_{2^5}$ was first observed in \cite[\S3.1]{Carlet-JC-2025}.


\section{A $q$-ary Version of Theorem~\ref{T1.2}}

The natural generalization of the binary multiplicative inverse function $\fin$ is not the $q$-ary multiplicative inverse function, but rather the function $g_{q-1}:\f_{q^n}\to\f_{q^n}$ defined by 
\[
g_{q-1}(x)=\begin{cases}
1/x^{q-1}&\text{if}\ x\ne 0,\cr
0&\text{if}\ x=0.
\end{cases}
\]
See \cite[\S4.4]{Ebeling-Hou-Rydell-Zhao-FFA-2016} for the justification for this generalization.

Many properties of the function $\fin$ are also possessed by the $q$-ary version $g_{q-1}$. For example, for $n\ge 2$, $g_{q-1}$ on $\f_{q^n}$ is 2nd order sum-free if and only if $n$ is odd \cite[Proposition~4.7]{Ebeling-Hou-Rydell-Zhao-FFA-2016}; $g_{q-1}$ is $k$th order sum-free if and only if it is $(n-k)$-th order sum-free \cite[Theorem~4.10]{Ebeling-Hou-Rydell-Zhao-FFA-2016}. However, these two functions do not always behave the same. For example, for $q=3,5$ and $n=7$, $g_{q-1}$ is $k$th order sum-free for all $1\le k\le 6$. Therefore, if we are to formulate a conjecture about $g_{q-1}$ emulating Carlet's conjecture on $\fin$, we will need to assume $n\ge 8$.

In this section, we will prove a $q$-ary version of Theorem~\ref{T1.2}. First, we need to recall some background from \cite[\S4.4]{Ebeling-Hou-Rydell-Zhao-FFA-2016}. For $k>0$, define
\begin{align*}
\Delta(X_1,\dots,X_k)\,&=\left|\begin{matrix}
X_1&\cdots& X_k\cr
X_1^q&\cdots& X_k^q\cr
\vdots&&\vdots\cr
X_1^{q^{k-1}}&\cdots& X_k^{q^{k-1}}
\end{matrix}
\right|\cr
&=\prod_{i=1}^k\,\prod_{a_1,\dots,a_{i-1}\in\f_q}\Bigl(X_i-\sum_{j=1}^{i-1}a_jX_j\Bigr)\in\f_q[X_1,\dots,X_k],
\end{align*}
and for $1\le i\le k$, define
\[
\Delta_i(X_1,\dots,X_k)=\left|\begin{matrix}
X_1&\cdots& X_k\cr
\vdots&&\vdots\cr
X_1^{q^{i-1}}&\cdots& X_k^{q^{i-1}}\cr
X_1^{q^{i+1}}&\cdots& X_k^{q^{i+1}}\cr
\vdots&&\vdots\cr
X_1^{q^k}&\cdots& X_k^{q^k}
\end{matrix}
\right|\in\f_q[X_1,\dots,X_k].
\]
Then $\Delta\mid \Delta_i$. The function $g_{q-1}$ is not $k$th order sum-free on $\f_{q^n}$ if and only if there exist $v_1,\dots,v_k\in\f_{q^n}$ such that $\Delta(v_1,\dots,v_k)\ne 0$ but $\Delta_1(v_1,\dots,v_k)=0$. Let
\[
F_k(X_1,\dots,X_k)=\frac{\Delta_1(X_1,\dots,X_k)}{\Delta(X_1,\dots,X_k)}\in\f_q[X_1,\dots,X_k].
\]

\begin{lem}\label{L3.1}
$F_3(X_1,X_2,X_3)$ is absolutely irreducible, i.e., irreducible in $\overline\f_q[X_1,X_2,X_3]$.
\end{lem}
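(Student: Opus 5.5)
The plan is to view $F_3$ through Dickson invariants rather than factor it by hand. Write $V=\f_q^3$. For indeterminates $X_1,X_2,X_3$, the subspace polynomial
\[
P(T)=\prod_{(a_1,a_2,a_3)\in\f_q^3}(T-a_1X_1-a_2X_2-a_3X_3)
\]
is $\f_q$-linearized. Expanding $\Delta(X_1,X_2,X_3,T)=0$ along the last column gives
\[
P(T)=T^{q^3}-c_2T^{q^2}+c_1T^{q}-c_0T,
\]
with $c_i=\Delta_{i}/\Delta$ up to sign and relabeling of the Dickson invariants. In particular $F_3=\Delta_1/\Delta$ is, up to sign, the Dickson invariant multiplying $T^{q}$; write it as $c_1$.

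By Dickson's theorem:
\begin{itemize}
\item $\overline\f_q[X_1,X_2,X_3]^{GL_3(\f_q)}=\overline\f_q[c_0,c_1,c_2]$ is a polynomial ring;
\item the extension $\overline\f_q(X_1,X_2,X_3)/\overline\f_q(c_0,c_1,c_2)$ is Galois with group $GL_3(\f_q)$.
\end{itemize}
Geometrically, the morphism $\pi:\Bbb A^3\to\Bbb A^3$, $x\mapsto(c_0,c_1,c_2)$, is finite and surjective. It is the quotient by $GL_3(\f_q)$, and it is étale over the open set $c_0\ne0$, because $c_0$ is a power of $\Delta$ up to a constant.

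The zero set of $F_3$ is $\pi^{-1}(H)$, where $H=\{c_1=0\}$ is a plane, hence irreducible. Since $H\not\subset\{c_0=0\}$, $\pi$ is étale over a dense open subset of $H$. Two consequences follow.
\begin{itemize}
\item $F_3$ has no repeated factors. Indeed, $F_3$ is not divisible by $\Delta$: $H$ meets $\{c_0\ne0\}$, so $F_3$ does not vanish identically on $\{\Delta=0\}$. Off $\{\Delta=0\}$, the pullback of the reduced equation $c_1$ under an étale map stays reduced.
\item The irreducible components of $\pi^{-1}(H)$ are permuted transitively by $GL_3(\f_q)$, because $H$ is irreducible and $\pi$ is a Galois quotient.
\end{itemize}
Hence $F_3$ is absolutely irreducible if and only if some (equivalently every) component is $GL_3(\f_q)$-stable. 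This holds if and only if the geometric monodromy group of $\pi$ over $H$ is all of $GL_3(\f_q)$. Concretely, the claim reduces to:

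\emph{The Galois group of $L(T)=T^{q^3}-aT^{q^2}-cT$ over $\overline\f_q(a,c)$, acting on its root space $W\cong\f_q^3$, is the full group $GL_3(\f_q)$.}

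This Galois group is the main obstacle. My first attempt would be to quote the known result that the "one coefficient missing" generic linearized polynomials have full group $GL_3(\f_q)$, proved by Abhyankar-type arguments. For a self-contained route I would proceed in three steps.
\begin{enumerate}
\item Show the group $G$ is irreducible on $W$. A nonzero proper $G$-invariant subspace would make $L$ a composition of two nontrivial $\overline\f_q(a,c)$-linearized polynomials. Comparing coefficients shows this is impossible with $a,c$ independent transcendentals and the $T^q$ coefficient zero.
\item Produce many elements of $G$ via inertia. Specializing along curves in the $(a,c)$-plane, for example $a=0$, where $T^{q^3}-cT$ gives a cyclic group of order $q^3-1$ (a Singer cycle), shows that $G$ contains a Singer cycle. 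Degenerating $c\to$ a point where $L$ acquires a repeated structure should yield a transvection.
\item Conclude by a classification argument. An irreducible subgroup of $GL_3(\f_q)$ containing a Singer cycle and a transvection is $GL_3(\f_q)$ or close to it. The remaining small cases, such as $\Gamma L_1(q^3)$, are ruled out because they contain no transvections.
\end{enumerate}

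As an alternative avoiding group theory, if step 3 gets messy I would argue directly. Fix generic $x_1,x_2$ and regard $F_3$ as a polynomial in $X_3$ of degree $q^3-q^2$. Its roots are $W\setminus\langle x_1,x_2\rangle$, where $W$ is the root space of the linearized polynomial $\Delta_1(x_1,x_2,X_3)$. Transitivity of the Galois group over $\overline\f_q(x_1,x_2)$ on these roots, together with primitivity of $F_3$ in $X_3$ (coprimality of its leading and constant coefficients as polynomials in $X_1,X_2$), then gives irreducibility by Gauss's lemma.
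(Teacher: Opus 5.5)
Your reduction is sound and genuinely different from the paper's. $F_3$ is the Dickson invariant $c_1$, and through the quotient map $\pi$ the absolute irreducibility of $F_3$ is \emph{equivalent} to the Galois group of $T^{q^3}-aT^{q^2}-cT$ over $\overline\f_q(a,c)$ being all of $\mathrm{GL}_3(\f_q)$.

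\textbf{The gap.} Because this is an equivalence, the entire content of the lemma now sits in that Galois claim, and the proposal does not establish it.
\begin{itemize}
\item The citation is to an unspecified ``Abhyankar-type'' result.
\item The transvection in step~2 is only asserted (``should yield'').
\item Step~3 (``$\mathrm{GL}_3(\f_q)$ or close to it'') needs a precise classification theorem rather than a heuristic.
\item Your fallback is circular. Over $\overline\f_q(x_1,x_2)$ the Galois group fixes $U=\langle x_1,x_2\rangle$ pointwise. It therefore lies in a group of order $q^2(q-1)=\deg_{X_3}F_3$ acting regularly on $W\setminus U$. So transitivity on $W\setminus U$ is exactly the irreducibility of $F_3$ over $\overline\f_q(x_1,x_2)$ that you want to prove.
\end{itemize}
A smaller slip: ``$H$ meets $\{c_0\ne0\}$'' shows $\{F_3=0\}\not\subset\{\Delta=0\}$, not the reverse inclusion you need. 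The clean fix is the paper's observation that $\Delta_1=F_3\Delta$ is square-free. That gives both square-freeness of $F_3$ and $\gcd(F_3,\Delta)=1$.

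\textbf{How to close it along your lines.}
\begin{itemize}
\item Keep the Singer cycle from $a=0$.
\item For a transvection, specialize $a=1$. This is again unramified, since $L$ is separable when $c\ne0$. Work over $\overline\f_q((c))$, where $L\equiv(T^q-T)^{q^2}\pmod c$.
\item The Newton polygon of $L/T$ shows that $0$ and the roots of positive valuation form a hyperplane $K\subset W$.
\item With $\pi^{q^2-1}=c$ one has $c^{-1}L(\pi S)/(\pi S)\equiv-(S^{q^2-1}+1)\pmod\pi$. By Hensel, $K$ lies in a tame extension, so wild inertia fixes $K$ pointwise.
\item All of inertia acts trivially on $W/K$, which reduction identifies with $\f_q$.
\item A root $\beta\equiv1$ has $v(\beta-1)=1/q^2$, from the Newton polygon of $\gamma^{q^3}-\gamma^{q^2}-c\gamma-c$. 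So wild inertia is nontrivial, and each of its nontrivial elements is a transvection.
\item Kantor's theorem on linear groups containing a Singer cycle gives $G\supseteq\mathrm{SL}_3(\f_q)$ or $G\le\Gamma L_1(q^3)$. The latter has no transvections: the maps $x\mapsto\lambda x^{q^i}$ have fixed spaces of dimension $0$, $1$ or $3$.
\item A Singer cycle has determinant generating $\f_q^*$, so $G=\mathrm{GL}_3(\f_q)$.
\end{itemize}

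\textbf{Comparison with the paper.} The paper uses exactly the Eisenstein argument your fallback lacks. View $F_3$ as a polynomial in $X_3$ (equivalently, use the homogeneous parts of $F_3(X_1,X_2,1)$). Its leading coefficient $C_{q^2-q}=\prod_{\alpha\in\f_{q^2}\setminus\f_q}(X_2-\alpha X_1)$ is square-free. By the recursion coming from $\Delta F_3=\Delta_1$, it divides every coefficient except the constant term $C_{q^3-q}=F_3(X_1,X_2,0)=\Delta(X_1,X_2)^{q^2-q}$, which is coprime to it. So the reversed polynomial is Eisenstein. That proof is short and elementary.

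Your route costs Dickson invariants, ramification theory and Kantor's theorem. Once completed, however, it appears to run verbatim for every $k\ge3$ using $T^{q^k}-aT^{q^{k-1}}-cT$. That would prove the absolute irreducibility of $F_k$, which the paper only conjectures in the remark after Lemma~\ref{L3.2}.
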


\begin{proof}
Since $F_3(X_1,X_2,X_3)$ is homogeneous, it suffices to show that $F_3(X_1,X_2,1)$ is absolutely irreducible. We have
\begin{align*}
&\Delta(X_1,X_2,1)=\left|\begin{matrix}
X_1&X_2&1\cr
X_1^q&X_2^q&1\cr
X_1^{q^2}&X_2^{q^2}&1
\end{matrix}\right|\cr
&=(X_1X_2^q-X_1^qX_2)-(X_1X_2^{q^2}-X_1^{q^2}X_2)+(X_1^qX_2^{q^2}-X_1^{q^2}X_2^q)\cr
&=X_1X_2\bigl[(X_2^{q-1}-X_1^{q-1})-(X_2^{q^2-1}-X_1^{q^2-1})+(X_1^{q-1}X_2^{q^2-1}-X_1^{q^2-1}X_2^{q-1})\bigr],
\end{align*}

\begin{align*}
&\Delta_1(X_1,X_2,1)=\left|\begin{matrix}
X_1&X_2&1\cr
X_1^{q^2}&X_2^{q^2}&1\cr
X_1^{q^3}&X_2^{q^3}&1
\end{matrix}\right|\cr
&=(X_1X_2^{q^2}-X_1^{q^2}X_2)-(X_1X_2^{q^3}-X_1^{q^3}X_2)+(X_1^{q^2}X_2^{q^3}-X_1^{q^3}X_2^{q^2})\cr
&=X_1X_2\bigl[(X_2^{q^2-1}-X_1^{q^2-1})-(X_2^{q^3-1}-X_1^{q^3-1})+(X_1^{q^2-1}X_2^{q^3-1}-X_1^{q^3-1}X_2^{q^2-1})\bigr].
\end{align*}
Write
\[
F_3(X_1,X_2,1)=C_{q^2-q}+C_{q^2-q+1}+\cdots+C_{q^3-q},
\]
where $C_i\in\f_q[X_1,X_2]$ is homogeneous of degree $i$. Then 
\begin{align}\label{eq:C+...}
&(C_{q^2-q}+\cdots+C_{q^3-q})\Bigl(1-\frac{X_2^{q^2-1}-X_1^{q^2-1}}{X_2^{q-1}-X_1^{q-1}}+\frac{X_1^{q-1}X_2^{q^2-1}-X_1^{q^2-1}X_2^{q-1}}{X_2^{q-1}-X_1^{q-1}}\Bigr)\\
&=\frac{X_2^{q^2-1}-X_1^{q^2-1}}{X_2^{q-1}-X_1^{q-1}}-\frac{X_2^{q^3-1}-X_1^{q^3-1}}{X_2^{q-1}-X_1^{q-1}}+\frac{X_1^{q^2-1}X_2^{q^3-1}-X_1^{q^3-1}X_2^{q^2-1}}{X_2^{q-1}-X_1^{q-1}}.\nonumber
\end{align}
Comparing the homogeneous parts of the two sides of \eqref{eq:C+...} gives
\[
C_{q^2-q}=\frac{X_2^{q^2-1}-X_1^{q^2-1}}{X_2^{q-1}-X_1^{q-1}},
\]
and for $q^2-q<i<q^3-q$, 
\[
C_i-C_{i-(q^2-q)}\frac{X_2^{q^2-1}-X_1^{q^2-1}}{X_2^{q-1}-X_1^{q-1}}+C_{i-(q^2-1)}\frac{X_1^{q-1}X_2^{q^2-1}-X_1^{q^2-1}X_2^{q-1}}{X_2^{q-1}-X_1^{q-1}}=0,
\]
where $C_j$ is defined as 0 for $j<q^2-q$. Using induction on $i$ we see that 
\[
C_{q^2-q}=\frac{X_2^{q^2-1}-X_1^{q^2-1}}{X_2^{q-1}-X_1^{q-1}}\,\Big |\, C_i\quad\text{for all}\ q^2-q\le i<q^3-q.
\]
We also have
\[
C_{q^3-q}=\frac{X_1^{q^2-1}X_2^{q^3-1}-X_1^{q^3-1}X_2^{q^2-1}}{X_1^{q-1}X_2^{q^2-1}-X_1^{q^2-1}X_2^{q-1}}.
\]
By Eisenstein's criterion, $F_3(X_1,X_2,1)$ is absolutely irreducible if we can show that $C_{q^2-q}$ is separable and $\gcd(C_{q^2-q},C_{q^3-q})=1$, equivalently, $C_{q^2-q}(1,X_2)$ is separable and 
\[
\gcd\bigl(C_{q^2-q}(1,X_2),C_{q^3-q}(1,X_2)\bigr)=1.
\]
Since $X_2(X_2^{q^2-1}-1)=X_2^{q^2}-X_2$ is separable, it follows that 
\[
C_{q^2-q}(1,X_2)=\frac{X_2^{q^2-1}-1}{X_2^{q-1}-1}
\]
is separable. We also have
\begin{align*}
&\gcd\bigl(C_{q^2-q}(1,X_2),C_{q^3-q}(1,X_2)\bigr)\cr
=\,&\gcd\Bigl(\frac{X_2^{q^2-1}-1}{X_2^{q-1}-1},\,\frac{X_2^{q^3-1}-X_2^{q^2-1}}{X_2^{q^2-1}-X_2^{q-1}}\Bigr)\cr
\,\Big|\,&\gcd\Bigl(\frac{X_2^{q^2-1}-1}{X_2^{q-1}-1},\,\frac{X_2^{q^3-1}-X_2^{q^2-1}}{X_2^{q-1}-1}\Bigr)\cr
=\,&\frac 1{X_2^{q-1}-1}\gcd(X_2^{q^2-1}-1,\,X_2^{q^3-1}-X_2^{q^2-1})\cr
=\,&\frac 1{X_2^{q-1}-1}\gcd(X_2^{q^2-1}-1,\,X_2^{q^3-1}-1)\cr
=\,&\frac 1{X_2^{q-1}-1}\gcd(X_2^{\gcd(q^2-1,q^3-1)}-1)=1.
\end{align*}
\end{proof}

\begin{lem}\label{L3.2}
For $k\ge 3$, $F_k(X_1,\dots,X_k)$ has an absolutely irreducible factor in $\f_q[X_1,\dots,X_k]$.
\end{lem}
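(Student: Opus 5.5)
We argue by induction on $k$, with Lemma~\ref{L3.1} as the base case $k=3$. To make the induction work we prove a slightly stronger statement:
\begin{quote}
\emph{for every $k\ge 3$, $F_k$ has an absolutely irreducible factor $P_k\in\f_q[X_1,\dots,X_k]$ with $P_k^2\nmid F_k$ in $\overline\f_q[X_1,\dots,X_k]$.}
\end{quote}
For $k=3$ this holds with $P_3=F_3$, by Lemma~\ref{L3.1}.

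The key computation is the leading coefficient of $F_k$ as a polynomial in $X_k$. Expand both determinants along the last column. In $\Delta(X_1,\dots,X_k)$ the highest power of $X_k$ is $X_k^{q^{k-1}}$. Its cofactor is, up to sign, the minor formed by rows $0,\dots,k-2$ and the first $k-1$ columns, namely $\Delta(X_1,\dots,X_{k-1})$. In $\Delta_1(X_1,\dots,X_k)$ the rows are the exponents $q^0,q^2,\dots,q^k$. The highest power is $X_k^{q^k}$, with cofactor the minor on rows $q^0,q^2,\dots,q^{k-1}$, namely $\Delta_1(X_1,\dots,X_{k-1})$. Both cofactors are nonzero polynomials, and the signs agree since both sit in the bottom-right corner. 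Hence, as a polynomial in $X_k$ over $\f_q[X_1,\dots,X_{k-1}]$, $F_k$ has degree $q^k-q^{k-1}$ and leading coefficient
\[
\frac{\Delta_1(X_1,\dots,X_{k-1})}{\Delta(X_1,\dots,X_{k-1})}=F_{k-1}(X_1,\dots,X_{k-1}).
\]

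For the inductive step, factor $F_k=\prod_j Q_j$ into irreducibles in $\overline\f_q[X_1,\dots,X_k]$. Leading coefficients in $X_k$ are multiplicative, so $F_{k-1}=\prod_j \mathrm{lc}_{X_k}(Q_j)$. By the induction hypothesis $P_{k-1}$ divides $F_{k-1}$ exactly once, so there is a unique index $j_0$ with $P_{k-1}\mid \mathrm{lc}_{X_k}(Q_{j_0})$. In particular $Q_{j_0}$ occurs in the factorization with multiplicity one (otherwise $P_{k-1}^2\mid F_{k-1}$).

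Now let $\sigma$ be the Frobenius acting on coefficients. Since $F_k$ and $P_{k-1}$ have coefficients in $\f_q$, $\sigma$ permutes the factors $Q_j$ up to scalars, and $\mathrm{lc}_{X_k}(Q_{j_0}^\sigma)=\mathrm{lc}_{X_k}(Q_{j_0})^\sigma$ is still divisible by $P_{k-1}$. Uniqueness of $j_0$ then forces $Q_{j_0}^\sigma=cQ_{j_0}$ for some $c\in\overline\f_q^*$. By Hilbert 90 (or simply by normalizing some nonzero coefficient to $1$), a scalar multiple $P_k$ of $Q_{j_0}$ lies in $\f_q[X_1,\dots,X_k]$. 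This $P_k$ is absolutely irreducible and $P_k^2\nmid F_k$, which completes the induction.

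The main obstacle is the descent step. It depends on the simple-multiplicity hypothesis: without it, two Galois-conjugate factors could share $P_{k-1}$ in their leading coefficients, and no $\f_q$-rational factor would be singled out. This is exactly why the statement is strengthened at the outset. One also has to check that the leading coefficient computation is correct, namely that the displayed cofactors are nonzero and carry the same sign. If that went wrong, the fallback would be to work instead with the top homogeneous component of the dehomogenization $F_k(X_1,\dots,X_{k-1},1)$, as was done in Lemma~\ref{L3.1}.
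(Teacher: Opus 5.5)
Your proof is correct, and its skeleton is the paper's. Because $F_k$ is homogeneous, its leading coefficient in $X_k$ is exactly the lowest-degree form $C_{q^{k-1}-q}$ of $F_k(X_1,\dots,X_{k-1},1)$ that the paper uses. So your fallback remark should point to the lowest component, not the top one; the top component is $\pm\Delta(X_1,\dots,X_{k-1})^{q^2-q}$, which is useless here. Your cofactor expansion also supplies the justification of $C_{q^{k-1}-q}=F_{k-1}$, which the paper only asserts.

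The two routes differ in the remaining ingredients:
\begin{itemize}
\item The paper cites Sze's Lemma~1.11 as a black box: a non-repeated absolutely irreducible $\f_q$-factor of the lowest form forces an absolutely irreducible $\f_q$-factor of the whole polynomial. You prove the special case you need directly, using multiplicativity of $X_k$-leading coefficients and Frobenius descent; the descent is sound, and normalizing one coefficient to $1$ indeed suffices.
\item The paper obtains $h^2\nmid F_{k-1}$ from square-freeness of $\Delta_1(X_1,\dots,X_{k-1})$. That follows because each $\partial\Delta_1/\partial X_i$ is $\pm$ a $q^2$-th power of a Moore determinant not involving $X_i$. You instead carry multiplicity one through a strengthened induction hypothesis.
\end{itemize}
Your argument is self-contained and avoids dehomogenization. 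The paper's is shorter, and it needs square-freeness of $\Delta_1$ anyway in Theorem~\ref{T3.3}, to get $\gcd(F_k,\Delta)=1$ for the B\'ezout-type bound. Your multiplicity-one statement about a single factor would not give that.
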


\begin{proof}
We use induction on $k$. When $k=3$, by Lemma~\ref{L3.1}, $F_3(X_1,X_2,X_3)$ is absolutely irreducible.

Now assume $k>3$. It suffices to show that $F_k(X_1,\dots,X_{k-1},1)$ has an absolutely irreducible factor in $\f_q[X_1,\dots,X_k]$. Write
\[
F_k(X_1,\dots,X_{k-1},1)=\frac{\Delta_1(X_1,\dots,X_{k-1},1)}{\Delta(X_1,\dots,X_{k-1},1)}=C_{q^{k-1}-q}+C_{q^{k-1}-q+1}+\cdots,
\]
where $C_i\in\f_q[X_1,\dots,X_{k-1}]$ is homogeneous of degree $i$ and 
\[
C_{q^{k-1}-q}=\frac{\Delta_1(X_1,\dots,X_{k-1})}{\Delta(X_1,\dots,X_{k-1})}=F_{k-1}(X_1,\dots,X_{k-1}).
\]
By the induction hypothesis, $C_{q^{k-1}-q}$ has an absolutely irreducible factor $h\in\f_q[X_1,\dots,X_{k-1}]$. For $1\le i\le k-1$,
\[
\frac{\partial}{\partial X_i}\Delta_1(X_1,\dots,X_{k-1})=\pm\Delta(X_1,\dots,X_{i-1},X_{i+1},\dots,X_{k-1})^{q^2},
\]
which is nonzero and independent of $X_i$. Hence $\Delta_1(X_1,\dots,X_{k-1})$ is separable in each $X_i$, so $\Delta_1(X_1,\dots,X_{k-1})$ is square-free. It follows that $h^2\nmid C_{q^{k-1}-q}$. Now by \cite[Lemma~1.11]{Sze-Thesis-2023}, $F_k(X_1,\dots,X_{k-1},1)$ has an absolutely irreducible factor in $\f_q[X_1,\dots,X_{k-1}]$.
\end{proof}

\noindent{\bf Remark.}
We believe that $F_k(X_1,\dots,X_k)$ ($k\ge 3$) itself is absolutely irreducible. However, having an absolutely irreducible factor in $\f_q[X_1,\dots,X_k]$ is enough for the purpose of the following theorem.

\medskip

In general, for a field $\f$ and a polynomial $F\in\f[X_1,\dots,X_k]$, we define
\[
V_{\f^k}(F)=\{(x_1,\dots,x_k)\in\f^k:F(x_1,\dots,x_k)=0\}.
\]

\begin{thm}\label{T3.3} ($q$-ary version of Theorem~\ref{T1.2}) Assume $3\le k\le n-3$. If
\[
n\ge\frac{13}3k+2\log_q\frac 12(1+\sqrt{21})
\]
or
\[
n\le\frac{13}{10}k-\frac35\log_q\frac 12(1+\sqrt{21}),
\]
$g_{q-1}$ is not $k$th order sum-free on $\f_{q^n}$.
\end{thm}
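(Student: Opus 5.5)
By the criterion recalled above, it suffices to find $v_1,\dots,v_k\in\f_{q^n}$ with $\Delta(v_1,\dots,v_k)\ne0$ and $\Delta_1(v_1,\dots,v_k)=0$. Equivalently, we need a zero of $F_k$ in $\f_{q^n}^k$ that is not a zero of $\Delta$. We treat the range $n\ge\frac{13}3k+\cdots$ directly. The range $n\le\frac{13}{10}k-\cdots$ then follows from the symmetry that $g_{q-1}$ is $k$th order sum-free if and only if it is $(n-k)$th order sum-free \cite[Theorem~4.10]{Ebeling-Hou-Rydell-Zhao-FFA-2016}. Substituting $k'=n-k$, the inequality $n\le\frac{13}{10}k-\frac35\log_q\frac12(1+\sqrt{21})$ becomes $n\ge\frac{13}3k'+2\log_q\frac12(1+\sqrt{21})$, which is just algebra.

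For the main range, Lemma~\ref{L3.2} gives an absolutely irreducible factor $h\in\f_q[X_1,\dots,X_k]$ of $F_k$. Its degree satisfies $d\le\deg F_k=\deg\Delta_1-\deg\Delta=(q^k+\cdots)-(1+q+\cdots+q^{k-1})$, so $d<q^k$. We apply the Lang--Weil bound in the explicit form used in \cite{Hou-Zhao-DCC-2026}, for instance Cafure--Matera: for an absolutely irreducible hypersurface of degree $d$ in $\f_{q^n}^k$,
\[
\bigl|\#V_{\f_{q^n}^k}(h)-q^{n(k-1)}\bigr|\le (d-1)(d-2)q^{n(k-3/2)}+5d^{13/3}q^{n(k-2)}.
\]
This gives a lower bound of roughly $q^{n(k-1)}-d^2q^{n(k-3/2)}-5d^{13/3}q^{n(k-2)}$.

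Next we bound the bad points, namely those with $\Delta=0$. These are the tuples that are linearly dependent over $\f_q$, and they lie in the union of the $(q^k-1)/(q-1)$ hyperplanes $\sum a_iX_i=0$. Each such hyperplane meets $V(h)$ in at most $d\,q^{n(k-2)}$ points, by the Schwartz--Zippel bound applied on the hyperplane, because $h$ does not vanish identically on any of them. This last point needs a check. We argue that $F_k$ is coprime to $\Delta$, using the explicit factorization of $\Delta$ into linear forms together with the square-freeness of $\Delta_1$ shown in the proof of Lemma~\ref{L3.2}. So the bad count is at most $q^k d\,q^{n(k-2)}\le q^{2k}q^{n(k-2)}$.

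The final step, and the main obstacle, is the numerical bookkeeping. We must show that
\[
q^{n(k-1)}>q^{2k}q^{n(k-3/2)}+5q^{13k/3}q^{n(k-2)}+q^{2k}q^{n(k-2)}.
\]
Dividing by $q^{n(k-2)}$ and setting $t=q^{n/2}$, this becomes a quadratic inequality in $t$ of the form $t^2-q^{2k}t-(5q^{13k/3}+q^{2k})>0$. The dominant condition is $q^n>c\,q^{13k/3}$, and solving the quadratic carefully is where the constant $\frac12(1+\sqrt{21})$ should appear, as the root of $x^2-x-5=0$. Here we use $d<q^k$ and $q^{2k}\le q^{13k/6}$ to normalize. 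So we take $x=q^{n/2-13k/6}$ and need $x^2-x-5>0$ (absorbing the lower-order term), that is, $x>\frac12(1+\sqrt{21})$. This is exactly $n\ge\frac{13}3k+2\log_q\frac12(1+\sqrt{21})$. I expect the fiddly part to be making the lower-order terms fit within this clean constant, possibly using slack from $d\le q^k-q^{k-1}$ or from a strict inequality.
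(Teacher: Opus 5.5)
Your proposal is correct and follows the paper's proof step for step: the reduction to a point of $V(F_k)\setminus V(\Delta)$, Lemma~\ref{L3.2} combined with the Cafure--Matera form of Lang--Weil, coprimality of $F_k$ and $\Delta$ from the square-freeness of $\Delta_1$, the quadratic in $q^{n/2}$ normalized via $q^{2k}\le q^{13k/6}$, and the $k\leftrightarrow n-k$ symmetry for the second range. The one variation is in the degenerate points: you count them inside $V(h)$ by splitting $V(\Delta)$ into its $(q^k-1)/(q-1)$ hyperplanes and applying Schwartz--Zippel on each, while the paper bounds $|V(F_k)\cap V(\Delta)|$ with the Bezout-type estimate \cite[Lemma~2.2]{Cafure-Matera-FFA-2006}; both give $q^{2k}q^{n(k-2)}$, and the slack you were looking for is just the strict inequality $q^{2k}<q^{13k/6}$ (the paper instead bounds the discriminant by $21q^{13k/3}$), not $d\le q^k-q^{k-1}$, which can fail since $\deg F_k=q^k-q$.
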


\begin{proof}
By the statement before Lemma~\ref{L3.1}, to prove that $g_{q-1}$ is not $k$th order sum-free on $\f_{q^n}$, it suffices to show that 
\[
\bigl|V_{\f_{q^n}^k}(F_k)\setminus V_{\f_{q^n}^k}(\Delta)\bigr|>0.
\]
Since $F_k$ has an absolutely irreducible factor $h\in\f_q[X_1,\dots,X_k]$ (Lemma~\ref{L3.2}), by the Lang-Weil bound as stated in \cite[Theorem~5.2]{Cafure-Matera-FFA-2006}, we have
\begin{align}\label{eq:LW}
&|V_{\f_{q^n}^k(}(F_k)|\ge|V_{\f_{q^n}^k}(h)|\\
&\ge q^{n(k-1)}-(q^k-q-1)(q^k-q-2)q^{n(k-3/2)}-5(q^k-q)^{13/3}q^{n(k-2)}\cr
&>q^{n(k-1)}-q^{2k}q^{n(k-3/2)}-5q^{13k/3}q^{n(k-2)}\cr
&=q^{n(k-2)}\bigl[q^n-q^{2k}q^{n/2}-5q^{13k/3}\bigr].\nonumber
\end{align}
Since $\Delta_1(X_1,\dots,X_k)$ is square-free, we have $\gcd(F_k,\Delta)=1$. Hence by \cite[Lemma~2.2]{Cafure-Matera-FFA-2006}
\begin{equation}\label{eq:Bezout}
\bigl|V_{\f_{q^n}^k}(F_k)\cap V_{\f_{q^n}^k}(\Delta)\bigr|\le\max\{\deg F_k,\,\deg\Delta\}^2q^{n(k-2)}\le q^{2k}\cdot q^{n(k-2)}.
\end{equation}
Combining \eqref{eq:LW} and \eqref{eq:Bezout} gives
\[
\bigl|V_{\f_{q^n}^k}(F_k)\setminus V_{\f_{q^n}^k}(\Delta)\bigr|>q^{n(k-2)}\bigl[q^n-q^{2k}q^{n/2}-5q^{13k/3}-q^{2k}\bigr].
\]
Let $x_0$ denote the larger root of the quadratic $X^2-q^{2k}X-5q^{13k/3}-q^{2k}\in\Bbb R[X]$.Then
\begin{align*}
x_0\,&=\frac 12\bigl[q^{2k}+(q^{4k}+20q^{13k/3}+4q^{2k})^{1/2}\bigr]\cr
&\le\frac 12\bigl[q^{2k}+(21q^{13k/3})^{1/2}\bigr]\cr
&\le\frac 12(1+\sqrt{21})q^{13k/6}.
\end{align*}
When $q^{n/2}\ge x_0$, i.e., when
\[
n\ge\frac{13}3k+2\log_q\frac 12(1+\sqrt{21}),
\]
we have 
\[
\bigl|V_{\f_{q^n}^k}(F_k)\setminus V_{\f_{q^n}^k}(\Delta)\bigr|>0,
\]
whence $g_{q-1}$ is not $k$th order sum-free on $\f_{q^n}$.

Since $g_{q-1}$ is $k$th order sum-free on $\f_{q^n}$ if and only if it is $(n-k)$-th order sum-free on $\f_{q^n}$, when 
\[
n\ge\frac{13}3(n-k)+2\log_q\frac 12(1+\sqrt{21}),
\]
i.e., when
\[
n\le\frac{13}{10}k-\frac35\log_q\frac 12(1+\sqrt{21}),
\]
$g_{q-1}$ is not $k$th order sum-free on $\f_{q^n}$ neither.
\end{proof}


\section{A Characterization of Sum-Free Functions}

Denote the elements of $\f_{2^n}$ by $x_0,\dots,x_{2^n-1}$ and identify them with the column vectors in $\f_2^n$. For a function $f:\f_{2^n}\to\f_{2^n}$, let $C(f)$ denote the binary linear code of length $2^n$ with parity check matrix
\begin{equation}\label{eq:mtx}
\left[\begin{matrix}
1&1&\cdots&1\cr
x_0&x_1&\cdots&x_{2^n-1}\cr
f(x_0)&f(x_1)&\cdots&f(x_{2^n-1})\end{matrix}\right].
\end{equation}
APN functions can be characterized in terms of the code $C(f)$.

\begin{thm}\label{t4.1}\cite{Carlet-2021, Carlet-Charpin-Zinoviev-DCC-1998}
For $n\ge 4$, a function $f:\f_{2^n}\to \f_{2^n}$ is APN if and only if $C(f)$ is a $[2^n,2^n-2n-1,6]$ code.
\end{thm}

\begin{rmk}\label{r4.2}\rm
Theorem~\ref{t4.1} was proved, in an equivalent form, by Carlet, Charpin and Zinoviev \cite{Carlet-Charpin-Zinoviev-DCC-1998}, and was stated in the present form in \cite{Carlet-2021}. It is not difficult to see that $f$ is APN if and only if $C(f)$ is a $[2^n,k,d]$ code with $k\ge 2^n-2n-1$ and $d\ge 6$. The sphere packing bound shows that $d\le 6$, and by a result of \cite{Brouwer-Tolhuizen-DCC-1993}, we have $k\le 2^n-2n-1$.

When $n=3$, $f:\f_{2^3}\to\f_{2^3}$ is APN if and only if $C(f)$ is an $[8,1,8]$ code.
\end{rmk}

We will see that the characterization in Theorem~\ref{t4.1} can be generalized to binary and $q$-ary sum-free functions of any order.

\subsection{The Reed-Muller code}\

A polynomial $f\in\f_q[X_1,\dots,X_n]$ is said to be {\em reduced} if the degree of $f$ in $X_i$ is at most $q-1$ for all $1\le i\le n$. Every function $g:\f_q^n\to\f_q$ is uniquely represented by a reduced polynomial $f\in\f_q[X_1,\dots,X_n]$. We identify $g$ with $f$ and define $\deg g=\deg f$. For $-1\le r\le n(q-1)$, the $r$th order {\em Reed-Muller} code of length $q^n$ is
\begin{equation}\label{eq:rm} 
R_q(r,n)=\{f:\f_q^n\to\f_q:\deg f\le r\}.
\end{equation}
When $q=2$, we write $R_2(r,n)=R(r,n)$. Note that $R_q(n(q-1),n)$ is the $\f_q$-algebra of all functions from $\f_q^n$ to $\f_q$ and $R_q(-1,n)=\{0\}$. Each function $\f_q^n\to \f_q$ is identified with a vector $(f(x_0),\dots,f(x_{q^n-1}))$, where $\{x_0,\dots,x_{q^n-1}\}=\f_q^n$. The Hamming weight of $f$ is $|f|=|\{x\in\f_q^n:f(x)\ne 0\}|$. Therefore, $R_q(r,n)$ is a linear code of length $q^n$ over $\f_q$; its other parameters are
\begin{equation}\label{eq:dim}
\text{dimension}=\sum_{i\le\lfloor r/q\rfloor}(-1)^i\binom ni\binom{r-qi+n}n,
\end{equation}
\begin{equation}\label{eq:minwt}
\text{min weight}=(q-t)q^{n-s-1},\quad\text{where}\ r=s(q-1)+t,\ 0\le t<q-1,
\end{equation}
see \cite[\S 2.3 and Corollary~5.12]{Hou-ams-gsm-2018}. The binary Reed-Muller code $R(r,n)$ ($0\le r\le n$) has dimension $\sum_{i=0}^r\binom ni$ and minimum weight $2^{n-r}$. The dual of $R_q(r,n)$ is $R_q(r,n)^\bot= R_q(r',n)$, where $r+r'=n(q-1)-1$.

The {\em next-to-minimum weight} of a code is the second smallest nonzero weight of the code. The next-to-minimum weight of the Reed-Muller code was determined in \cite{Kasami-Tokura-IEEE-IT-1970} for $q=2$ and in \cite{Bruen-CM523-2010, Erickson-thesis-1974} for general $q$.

\begin{thm}\label{t4.3}\cite[Theorem~1]{Kasami-Tokura-IEEE-IT-1970} For $0<r\le n$, the next-to-minimum weight of $R(r,n)$ is
\begin{equation}\label{eq:2wt}
2^{n-r}+c 2^{n-r-1},
\end{equation}
where $2^{n-r}$ is the minimum weight of $R(r,n)$ and
\[
c=\begin{cases}
2&\text{if}\ r=1,\cr
1&\text{if}\ 2\le r\le n-2,\cr
2&\text{if}\ r=n-1,n.
\end{cases}
\]
\end{thm}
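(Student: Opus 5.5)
The plan is to prove the two halves of the statement separately: the value $2^{n-r}+c2^{n-r-1}$ is attained, and no weight lies strictly between it and the minimum weight $2^{n-r}$. I will use the $(u\mid u+v)$ decomposition of Reed--Muller codes together with induction on $n$, and the classical fact that the minimum weight codewords of $R(r,n)$ are exactly the indicators of $(n-r)$-dimensional affine flats.

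\emph{Extreme cases.} These are direct.
\begin{itemize}
\item For $r=1$, every nonzero, nonconstant affine function has weight $2^{n-1}$ and the constant $1$ has weight $2^n$. The weights are $0,2^{n-1},2^n$, so the next-to-minimum weight is $2^n=2^{n-1}+2\cdot2^{n-2}$.
\item For $r=n$, the code is $\f_2^{2^n}$, with weights $1,2,\dots$. The next-to-minimum weight is $2=1+2\cdot\frac12$.
\item For $r=n-1$, the code is the even-weight code, with weights $2,4,\dots$. The next-to-minimum weight is $4=2+2\cdot1$.
\end{itemize}

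\emph{Attainment for $2\le r\le n-2$.} Take
\[
f=X_1\cdots X_{r-2}\,(X_{r-1}X_r+X_{r+1}X_{r+2}),
\]
which has degree $r$ and uses $r+2\le n$ variables. The quadratic $X_{r-1}X_r+X_{r+1}X_{r+2}$ has weight $6$ on $\f_2^4$. The prefactor forces $r-2$ coordinates equal to $1$, and the remaining $n-r-2$ coordinates are free. Hence
\[
|f|=6\cdot2^{n-r-2}=3\cdot2^{n-r-1}.
\]

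\emph{Lower bound for $2\le r\le n-2$.} I must show that any $f\in R(r,n)$ with $2^{n-r}\le|f|<3\cdot2^{n-r-1}$ is the indicator of an $(n-r)$-flat, and so has weight exactly $2^{n-r}$. The proof is by induction on $n$.

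Fix a hyperplane, after an affine change of coordinates $x_n=0$. Write $f=g+x_nh$ with $g\in R(r,n-1)$ and $h\in R(r-1,n-1)$. The two halves are $g$ on $\{x_n=0\}$ and $g+h$ on $\{x_n=1\}$, and their sum is $h$.

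\begin{itemize}
\item \emph{One half is zero.} Then the other half equals $h\ne0$, so it has weight at least $2^{n-r}$. Equality forces $h$ to be a minimum weight codeword of $R(r-1,n-1)$, that is, a flat of the right dimension, and then $f$ is a flat.
\item \emph{Both halves nonzero.} Each half has weight at least $2^{n-r-1}$, the minimum weight of $R(r,n-1)$. If both are minimum weight, they are $(n-r-1)$-flats $S_0,S_1$. Their symmetric difference is the support of $h\in R(r-1,n-1)$, so it is empty or has size at least $2^{n-r}$. If it is empty, then $f$ is $S_0\times\{0,1\}$, an $(n-r)$-flat. Otherwise the two flats are disjoint. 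I then need to check that the degree bound (the top-degree parts of $g$ and $g+h$ agree) forces them to be translates of each other, so $f$ is again an $(n-r)$-flat.
\item \emph{The mixed case.} One half has weight $2^{n-r-1}$ and the other lies in $[3\cdot2^{n-r-2},\,2^{n-r})$; the latter range comes from the induction hypothesis applied to $R(r,n-1)$, valid since $r\le n-3$ there, or by a direct check otherwise.
\end{itemize}

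The mixed case is the main obstacle, because a single split only gives the bound $5\cdot2^{n-r-2}$. My first attempt will be to exploit the freedom in choosing the hyperplane. Averaging $\bigl||f\cap H|-|f\setminus H|\bigr|$ over all hyperplane directions, via the Fourier/Walsh coefficients of the indicator of $f$, should produce a direction in which the split is balanced or one half vanishes, reducing to the cases already handled.

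Failing that, I would follow Kasami--Tokura directly. A codeword of weight $w<2^{n-r+1}$ can be written as $x_1\cdots x_{r-\mu}\,g$, up to affine equivalence, with $g$ a quadratic or a product of linear forms. The conclusion would then follow from the explicit weight distribution of quadratic forms: the smallest weight above $2^{n-r}$ arises from a rank-$4$ quadratic, giving $3\cdot2^{n-r-1}$.
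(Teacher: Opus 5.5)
The paper gives no proof of this statement; it quotes it from Kasami--Tokura. Your fallback of invoking their normal form for codewords of weight $<2^{n-r+1}$ therefore amounts to the same citation, not a proof. You also misdescribe that normal form: its second family is $x_1\cdots x_{r-\mu}(x_{r-\mu+1}\cdots x_r+x_{r+1}\cdots x_{r+\mu})$, a sum of two monomials, not a product of linear forms.

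Your extreme cases and your example of weight $3\cdot 2^{n-r-1}$ are correct. Your hyperplane induction also correctly isolates the one hard configuration. That configuration, however, is left unresolved. ``Averaging should produce a balanced direction'' is not an argument, and no purely metric averaging can settle it.

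To see this, put $D=2^{n-r}$, let $S$ be the support of $f$, and set $w=|S|\in(D,3D/2)$ and $\delta=w-D$. Apply the induction to $R(r,n-1)$ and $R(r-1,n-1)$; use the even-weight code when $r=n-2$ and $R(1,n-1)$ when $r=2$. Then every non-mixed split is already contradictory:
\begin{itemize}
\item a zero half makes $w$ a weight of $R(r-1,n-1)$ lying in $(D,3D/2)$;
\item two nonzero, non-minimal halves each have weight $\ge 3D/4$, so $w\ge 3D/2$;
\item two minimal halves give $w=D$.
\end{itemize}
Hence in every direction $a\ne 0$ exactly one half has weight $D/2$, and so $|\widehat{1_S}(a)|=\delta$, where $\widehat{1_S}(a)=\sum_{x\in S}(-1)^{a\cdot x}$. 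Parseval then gives $w^2+(2^n-1)\delta^2=2^nw$. For $D\ge 8$ this equation, read over the reals, has a root $\delta\in(0,D/2)$, so size comparisons alone yield nothing.

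The missing idea is integrality. Substituting $w=D+\delta$ and $2^n=2^rD$ turns the Parseval identity into
\[
\delta\bigl(2^r\delta-2^r+2\bigr)=D\,(2^r-1).
\]
Here $2^r\delta-2^r+2=2\bigl(2^{r-1}(\delta-1)+1\bigr)$, and the second factor is odd for $r\ge 2$. Comparing $2$-adic valuations gives $v_2(\delta)+1=n-r$, so $\delta\ge 2^{n-r-1}=D/2$, which contradicts $\delta<D/2$.

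With this step your induction closes and gives a complete, elementary proof, quite different from Kasami--Tokura's structural classification. Run the induction on the weight statement alone, for all pairs $(r,n)$ at once. You then never need the minimum-weight characterization of flats. In particular, the step ``the two disjoint flats must be translates'' can be dropped, since two minimal halves already force $w=D$.
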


\begin{thm}\label{t4.4}\cite[Theorem~4.5]{Bruen-CM523-2010} \cite[Theorem~3.1]{Erickson-thesis-1974}
Assume $q>2$, and let $0<r\le n(q-1)$, written in the form $r=s(q-1)+t$, $0<t\le q-1$. Then the next-to-minimum weight of $R_q(r,n)$ is 
\begin{equation}\label{eq:2wtq}
(q-t)q^{n-s-1}+cq^{n-s-2},
\end{equation}
where $(q-t)q^{n-s-1}$ is the minimum weight of $R_q(r,n)$ and
\[
c=\begin{cases}
q&\text{if}\ s=n-1,\cr
t-1&\text{if}\ s\le n-2,\ t>1\cr
q&\text{if}\ 0<s\le n-2,\ q\ge 4,\ t=1,\cr
q-1&\text{if}\ 0<s\le n-2,\ q=3,\ t=1,\cr
q&\text{if}\ s=0,\ t=1.
\end{cases}
\]
\end{thm}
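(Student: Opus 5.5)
This statement is a cited result: Theorem~\ref{t4.4} is taken from \cite{Bruen-CM523-2010, Erickson-thesis-1974}, so within the paper it is assumed rather than proved. If I had to reconstruct a proof, I would argue in two parts. First I would show that every codeword of weight strictly above the minimum has weight at least $(q-t)q^{n-s-1}+cq^{n-s-2}$. Then I would exhibit codewords that attain this value.

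Throughout, write $r=s(q-1)+t$ with $0<t\le q-1$; this is the convention in the statement. It differs from the one in \eqref{eq:minwt}, which allows $t=0$. The minimum weight $(q-t)q^{n-s-1}$ is the standard one.

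\textbf{Attaining examples.} I would build these as products of linear factors with a small perturbation.

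For $s\le n-2$ and $t>1$, take
\[
f=\prod_{i=1}^{s}\bigl(1-X_i^{q-1}\bigr)\cdot g(X_{s+1},X_{s+2}),
\]
where $g$ has degree $t$. I would choose $g$ as a product of $t-1$ parallel lines together with one nonparallel line. By counting, its nonzero set in $\f_q^2$ then has size $(q-t)q+(t-1)$. Multiplying by $q^{n-s-2}$ gives the claimed weight with $c=t-1$.

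For $t=1$, the quantity $c$ is governed by the weights of degree-$q$ polynomials in the plane. Here I would use the plane minus two lines, which contributes $c=q$, or, when $q=3$, a conic-type example, which contributes $c=q-1$. The extreme cases $s=n-1$ and $s=0$ I would check directly from the weight enumerator in low dimension.

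\textbf{Lower bound.} I would use induction on $n$ by slicing $f$ along the hyperplanes $X_n=a$, $a\in\f_q$. Each restriction $f|_{X_n=a}$ lies in $R_q(r,n-1)$. Moreover, the differences of restrictions have degree $\le r-1$, and the $j$-th finite difference has degree $\le r-j$.

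If every restriction is either zero or of minimum weight in $n-1$ variables, I would invoke the Delsarte--Goethals--MacWilliams description of minimum-weight codewords. That description says the supports are unions of parallel affine flats of a specified shape. From it I would argue that either $f$ itself is of minimum weight, or the supports fail to align and so pick up an extra $cq^{n-s-2}$ points.

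If instead some restriction has weight above the minimum, the induction hypothesis combined with a counting argument over the $q$ slices gives the bound.

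\textbf{Main obstacle.} I expect the hardest step to be the alignment analysis in the case $t=1$, $s\ge1$, and in particular the exceptional value $q=3$. There the mixing of flats of different shapes is delicate, and that is exactly where $c$ drops to $q-1$. My first approach would be to reduce to $n=s+2$ by choosing slicing directions adapted to the flats, and then to classify the possible small supports by hand.
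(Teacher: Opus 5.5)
You are right that the paper gives no proof here. The result is imported from Bruen and Erickson and used only as a black box, to identify the next-to-minimum weight of $R_q((n-s)(q-1),n)$ in Theorem~\ref{t4.6}, so citing it is exactly what the paper does.

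As a reconstruction, though, your sketch has a genuine gap in the lower bound, which is the entire content of the theorem. Slicing plus the induction hypothesis plus counting does not produce the sharp constant $c$. The critical configurations are of two kinds:
\begin{itemize}
\item exactly $q-t$ nonzero slices, each then of degree $\le r-t=s(q-1)$ and hence of weight $\ge q^{n-s-1}$;
\item all $q$ slices nonzero, each of weight $\ge (q-t)q^{n-s-2}$.
\end{itemize}
In both, the count returns only the minimum weight, and a single deviating slice need not gain enough. For instance, take $t=1$, $0<s\le n-2$, and $f=(X_n-a_0)\tilde h$ vanishing on one slice. Suppose $q-2$ of the remaining slices are minimum-weight words of $R_q(s(q-1),n-1)$ and one is a next-to-minimum word; the theorem for parameters $(n-1,s-1,q-1)$ gives $c'=q-2$. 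Then the count is $(q-1)q^{n-s-1}+(q-2)q^{n-s-2}$. This is below the claimed $(q-1)q^{n-s-1}+cq^{n-s-2}$ with $c\in\{q-1,q\}$. Such configurations must be excluded using the fact that the slices are restrictions of a single polynomial of degree $\le r$, together with the Delsarte--Goethals--MacWilliams structure of the slices. Your step ``the supports fail to align and so pick up an extra $cq^{n-s-2}$ points'' is precisely this missing argument, and it is what the cited sources actually carry out.

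Your $t>1$ example is correct: the nonzero set of $\prod_{j<t}(X_{s+1}-a_j)\,(X_{s+2}-b)$ has $(q-t+1)(q-1)=(q-t)q+t-1$ points. The $t=1$ examples, however, are wrong.
\begin{itemize}
\item With $s-1$ factors $1-X_i^{q-1}$ and a planar factor $g$, the weight is $|g|\,q^{n-s-1}=(q-1)q^{n-s-1}+(|g|-q+1)\,q\cdot q^{n-s-2}$. So $c$ is forced to be a multiple of $q$.
\item Hence $c=q$ needs $|g|=q$, i.e.\ a line. ``The plane minus two lines'' has $(q-1)^2$ or $q(q-2)$ points, so it does not work. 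More simply, $c=q$ is realized by the indicator $\prod_{i=1}^{s}(1-X_i^{q-1})$ of an $(n-s)$-flat, which has degree $s(q-1)<r$ and weight $q^{n-s}$.
\item The value $c=q-1$ for $q=3$ can never come from a planar factor, since $8\cdot 3^{n-s-2}$ is not a multiple of $3^{n-s-1}$. It is a three-variable phenomenon, e.g.\ $\prod_{i<s}(1-X_i^{2})\cdot X_s(X_{s+1}-a)(X_{s+2}-b)$.
\end{itemize}
Finally, the extreme cases need no low-dimensional enumeration. For $s=n-1$, any weight above $q-t$ is at least $q-t+1$, and this is attained by $\prod_{i<n}(1-X_i^{q-1})\prod_{j<t}(X_n-a_j)$. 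For $s=0$, $t=1$, the code $R_q(1,n)$ has only the nonzero weights $(q-1)q^{n-1}$ and $q^n$.
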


Note that in Theorem~\ref{t4.4}, $0<t\le q-1$, while in Equation~\eqref{eq:minwt}, $0\le t<q-1$. Also note that when $r=0$, $R_q(0,n)$ does not have a next-to-minimum weight.

The minimum weight codewords of $R_q(r,n)$ were determined by the Delsarte-Goethals-MacWilliams theorem \cite{Delsarte-Goethals-MacWilliams-IC-1970}; also see \cite[Theorem~5.15]{Hou-ams-gsm-2018}. In particular, when $r=s(q-1)$, the minimum weight codewords of $R_q(r,n)$ are scalar multiples of indicator functions of $(n-s)$-dimensional $\f_q$-affine subspaces of $\f_q^n$. The indicator function of a subset $A\subset\f_q^n$ is defined as
\[
\begin{array}{cccl}
i_A:&\f_q^n&\longrightarrow&\f_q\vspace{0.3em}\cr
&x&\longmapsto &\begin{cases}
1&\text{if}\ x\in A,\cr
0&\text{if}\ x\notin A.
\end{cases}
\end{array}
\]

\subsection{A coding theoretic characterization of sum-free functions}\

Denote that elements of $\f_{q^n}$ by $x_0,\dots,x_{q^n-1}$ and identify them with the column vectors in $\f_q^n$. For $1\le s\le n-1$, let $A_s$ be a generator matrix of $R_q(s(q-1)-1,n)$ whose columns are indexed by $x_0,\dots,x_{q^n-1}$. For any function $f:\f_{q^n}\to\f_{q^n}$, let $C_s(f)$ denote the linear code over $\f_q$ with parity check matrix
\begin{equation}\label{eq:mtx-q}
\left[
\begin{matrix}
&A_s\cr
f(x_0)&\cdots&f(x_{q^n-1})\end{matrix}
\right]
\end{equation}
When $q=2$ and $s=2$, this is the matrix in \eqref{eq:mtx}.

\begin{thm}\label{t4.5}
Assume $q=2$ and $1\le s\le n-1$. A function $f:\f_{2^n}\to \f_{2^n}$ is $s$th order sum-free if and only if $C_s(f)$ is a $[2^n,k,d]$ code over $\f_2$ with
\begin{equation}\label{eq:k-ge}
k\ge 2^n-n-\sum_{i=0}^{s-1}\binom ni,
\end{equation}
\begin{equation}\label{eq:d-ge}
d\ge\begin{cases}
4&\text{if}\ s=1,\cr
2^s+2^{s-1}&\text{if}\ 2\le s\le n-2,\cr
2^n&\text{if}\ s=n-1.
\end{cases}
\end{equation}
\end{thm}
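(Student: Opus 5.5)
The plan is to identify $C_s(f)$ as a subcode of a Reed--Muller code. Then the condition on $k$ holds automatically, and the condition on $d$ encodes exactly the absence of minimum weight codewords.

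\emph{Step 1: identify the code.} The rows of $A_s$ span $R(s-1,n)$. A vector $c=(c_0,\dots,c_{2^n-1})\in\f_2^{2^n}$ therefore lies in $C_s(f)$ if and only if two conditions hold:
\[
c\in R(s-1,n)^\bot=R(n-s,n)\quad\text{and}\quad \sum_i c_if(x_i)=0\ \text{in}\ \f_{2^n}.
\]
Here we use $R_q(r,n)^\bot=R_q(r',n)$ with $r+r'=n-1$ for $q=2$. Writing the last condition over $\f_2$ gives $n$ linear equations. Since $\dim R(n-s,n)=\sum_{i=0}^{n-s}\binom ni=2^n-\sum_{i=0}^{s-1}\binom ni$, we get
\[
\dim C_s(f)\ge 2^n-n-\sum_{i=0}^{s-1}\binom ni
\]
for every $f$. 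So \eqref{eq:k-ge} always holds, and the theorem reduces to showing that $f$ is $s$th order sum-free if and only if $d$ satisfies \eqref{eq:d-ge}. This dimension bound is positive for $1\le s\le n-1$, so $C_s(f)\ne\{0\}$ and $d$ is well defined.

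\emph{Step 2: minimum weight codewords.} The minimum weight of $R(n-s,n)$ is $2^s$. By the Delsarte--Goethals--MacWilliams theorem with $q=2$ and $r=n-s$, the minimum weight codewords are exactly the indicator functions $i_A$ of $s$-dimensional affine subspaces $A\subseteq\f_2^n=\f_{2^n}$. For such $A$, we have $i_A\in C_s(f)$ if and only if $\sum_{x\in A}f(x)=0$. Hence $f$ is $s$th order sum-free if and only if $C_s(f)$ contains no word of weight $2^s$. Since $C_s(f)\subseteq R(n-s,n)$, every nonzero word of $C_s(f)$ has weight at least $2^s$. So sum-freedom is equivalent to $d>2^s$.

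\emph{Step 3: the next weight.} Every nonzero weight of $C_s(f)$ is a weight of $R(n-s,n)$. Therefore $d>2^s$ if and only if $d$ is at least the next-to-minimum weight of $R(n-s,n)$. Theorem~\ref{t4.3} with $r=n-s$ gives this weight in three cases:
\begin{itemize}
\item for $r=n-1$ (that is, $s=1$), it is $2+2=4$;
\item for $2\le r\le n-2$ (that is, $2\le s\le n-2$), it is $2^s+2^{s-1}$;
\item for $r=1$ (that is, $s=n-1$), it is $2^{n-1}+2\cdot2^{n-2}=2^n$.
\end{itemize}
These values match \eqref{eq:d-ge}, which completes the argument.

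The only step with real content is Step~2, the exact description of the minimum weight words. It is supplied by Delsarte--Goethals--MacWilliams, so I expect no genuine obstacle. The remaining care is bookkeeping: the duality index $r'=n-s$, and matching the three cases of Theorem~\ref{t4.3} to the three ranges of $s$.
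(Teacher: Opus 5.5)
Your proposal is correct and follows the paper's proof. Like the paper, you identify $C_s(f)$ as the subcode of $R(s-1,n)^\perp=R(n-s,n)$ cut out by the $f$-row, use Delsarte--Goethals--MacWilliams to show that its possible minimum-weight words are exactly the indicators of $s$-dimensional affine subspaces, and observe that the bound on $d$ is the next-to-minimum weight of $R(n-s,n)$ given by Theorem~\ref{t4.3}. Your explicit matching of the three cases under $r=n-s$, and your check that $C_s(f)\neq\{0\}$ so that $d$ is defined, supply details the paper leaves implicit.
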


\begin{proof}
\eqref{eq:k-ge} is obvious. Since $R(s-1,n)^\bot=R(n-s,n)$, we have
\[
C_s(f)=\{u\in R(n-s,n): (f(x_0),\dots,f(x_{q^n-1}))u^T=0\}.
\]
The right side of $\eqref{eq:d-ge}$ is the next-to-minimum weight of $R(n-s,n)$. Therefore,
\begin{align*}
&\text{\eqref{eq:d-ge} holds}\cr
\Leftrightarrow\ &\text{for each minimum weight codeword $u\in R(n-s,n)$},\cr
&(f(x_0),\dots,f(x_{2^n-1}))u^T\ne 0,\cr
\Leftrightarrow\ & \text{for each indicator vector $u$ of an $s$-dimensional $\f_2$-affine subspace of $\f_{2^n}$},\cr
&(f(x_0),\dots,f(x_{2^n-1}))u^T\ne 0,\cr
\Leftrightarrow\ & \sum_{x\in A}f(x)\ne 0\ \text{for every $s$-dimensional $\f_2$-affine subspace $A$ of $\f_{2^n}$},\cr
\Leftrightarrow\ &\text{$f$ is $s$th order sum-free}.
\end{align*}
\end{proof}

\begin{thm}\label{t4.6}
Assume $q>2$ and $1\le s\le n-1$. A function $f:\f_{q^n}\to\f_{q^n}$ is $s$th order sum-free if and only if $C_s(f)$ is a $[q^n,k,d]$ code over $\f_q$ with
\begin{equation}\label{eq:qk-ge}
k\ge q^n-n-\dim_{\f_q}R_q(s(q-1)-1,n),
\end{equation}
\begin{equation}\label{eq:qd-ge}
d\ge q^s+(q-2)q^{s-1}.
\end{equation}
\end{thm}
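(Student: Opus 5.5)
The plan is to follow the proof of Theorem~\ref{t4.5} step by step. The only new work is identifying the dual Reed--Muller code and reading off its minimum and next-to-minimum weights from \eqref{eq:minwt} and Theorem~\ref{t4.4}.

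\emph{Dimension bound.} Write each value $f(x_i)\in\f_{q^n}$ as a column vector in $\f_q^n$. Then the last row of \eqref{eq:mtx-q} amounts to $n$ rows over $\f_q$, and $A_s$ has $\dim_{\f_q}R_q(s(q-1)-1,n)$ rows. So the parity check matrix has at most $n+\dim_{\f_q}R_q(s(q-1)-1,n)$ independent rows over $\f_q$, and \eqref{eq:qk-ge} holds for every $f$.

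\emph{Identifying $C_s(f)$.} By the duality $R_q(r,n)^\bot=R_q(r',n)$ with $r+r'=n(q-1)-1$, taking $r=s(q-1)-1$ gives $r'=(n-s)(q-1)$. Hence
\[
C_s(f)=\Bigl\{u\in R_q((n-s)(q-1),n):\ \sum_{i}u_if(x_i)=0\Bigr\}.
\]

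\emph{Minimum weight.} By \eqref{eq:minwt} with $r=(n-s)(q-1)$, so $t=0$, the minimum weight of $R_q((n-s)(q-1),n)$ is $q\cdot q^{n-(n-s)-1}=q^s$. By the Delsarte--Goethals--MacWilliams theorem, its minimum weight codewords are exactly the vectors $c\,i_A$ with $c\in\f_q^*$ and $A$ an $s$-dimensional $\f_q$-affine subspace of $\f_q^n=\f_{q^n}$.

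\emph{Next-to-minimum weight.} Write $(n-s)(q-1)$ in the form used in Theorem~\ref{t4.4}, namely $(n-s-1)(q-1)+(q-1)$, with parameters $s'=n-s-1$ and $t=q-1$. Since $1\le s\le n-1$, we have $0\le s'\le n-2$. Since $q>2$, we have $t=q-1>1$. So we are in the case $c=t-1=q-2$, and the next-to-minimum weight is
\[
(q-t)q^{n-s'-1}+(q-2)q^{n-s'-2}=q^s+(q-2)q^{s-1}.
\]
This matches the right side of \eqref{eq:qd-ge}. The one step needing care is matching the two conventions for $t$ ($0\le t<q-1$ versus $0<t\le q-1$) and checking that the hypothesis $q>2$ is exactly what excludes the other cases of Theorem~\ref{t4.4}.

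\emph{The equivalence.} Every nonzero codeword of $C_s(f)$ lies in $R_q((n-s)(q-1),n)$, so its weight is either $q^s$ or at least $q^s+(q-2)q^{s-1}$. Hence \eqref{eq:qd-ge} holds if and only if $C_s(f)$ contains no minimum weight codeword of $R_q((n-s)(q-1),n)$. By the description of those codewords, this holds if and only if $\sum_{x\in A}c\,f(x)\ne0$ for every $c\in\f_q^*$ and every $s$-dimensional affine subspace $A$. Since $c\ne0$, this is equivalent to $\sum_{x\in A}f(x)\ne0$ for all such $A$, that is, $f$ is $s$th order sum-free.

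Together with the dimension bound, which holds unconditionally, this proves both directions.
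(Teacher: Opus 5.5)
Your proposal is correct and is essentially the paper's proof. The paper simply says the argument is identical to that of Theorem~\ref{t4.5}, noting that the right side of \eqref{eq:qd-ge} is the next-to-minimum weight of $R_q((n-s)(q-1),n)$ and that its minimum weight codewords are indicator vectors (up to scalars) of $s$-dimensional affine subspaces; your explicit check that Theorem~\ref{t4.4} applies with parameters $n-s-1\le n-2$ and $t=q-1>1$, giving $c=q-2$, supplies the computation the paper leaves implicit.
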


\begin{proof}
The proof is identical to that of Theorem~\ref{t4.5}. We only have to note that the right side of \eqref{eq:qd-ge} is the next-to-minimum weight of $R_q(s(q-1)-1,n)^\bot=R_q((n-s)(q-1),n)$ and that the minimum weight codewords of $R_q((n-s)(q-1),n)$ are indicator vectors of $s$-dimensional $\f_q$-affine subspaces of $\f_{q^n}$.
\end{proof}

There are some open questions. In Theorem~\ref{t4.1}, the dimension and minimum weight of the code $C(f)$ are determined and they are independent of $f$ as long as $f$ is APN, i.e., 2nd order sum-free. Do we have the same conclusion for the code $C_s(f)$ in Theorems~\ref{t4.5} and \ref{t4.6}, where $f$ is an $s$th order sum-free function? If the dimension $k$ and the minimum weight $d$ of $C_s(f)$ depend on the sum-free function $f$, what are the ranges for $k$ and $d$? Likely, these questions will lead the investigation of sum-free functions to new directions.


\section{A Study of the Welch Function}\label{sec5}

Recall that the Welch function on $\f_{2^n}$, where $n=2m+1$, is defined by $W_n(X)=X^{2^m+3}$. It is known that $W_n(X)$ is APN, i.e., 2nd order sum-free \cite{Dobbertin-IEEE-IT-1999}. In this section, we will give an alternative proof of this result and we will also investigate the 3rd order sum-freedom of $W_n(X)$.

\subsection{An alternative proof for the 2nd order sum-freedom of $W_n(X)$}\

The proof given here is based on the fact that with computer assistance, the computation and factorization of resultants of polynomials of moderate degree in few variables are effortless; such tasks were laborious in the past.

Assume to the contrary that $W_n(X)$ is not 2nd order sum-free, that is, there is a 2-dimensional $\f_2$-affine subspace $A$ of $\f_{2^n}$ such that $\sum_{a\in A}W_n(a)=0$. We may assume, without loss of generality, that $A=z+\langle 1,x\rangle$, where $x\in\f_{2^n}\setminus\f_2$. We have
\begin{align}\label{eq:sumA=0}
0\,&=\sum_{a\in z+\langle 1,x\rangle}a^{2^m+2+1}=\sum_{a_1,a_2\in\f_2}(z+a_1+a_2x)^{2^m+2+1}\\
&=\sum_{a_1,a_2\in\f_2}(z^{2^m}+a_1+a_2x^{2^m})(z^2+a_1+a_2x^2)(z+a_1+a_2x)\cr
&=x^{2^m+2}+x^{2^m+1}+x^3+x^{2^m}+x^2+x+z^{2^m}(x+x^2)+z^2(x+x^{2^m})+z(x^2+x^{2^m})\cr
&=(x+x^{2^m})(1+x+x^2)+(z^{2^m}+z)(x+x^2)+(x+x^{2^m})(z^2+z)\nonumber
\end{align}
Let 
\[
s=x+x^{2^m},\quad t=s^{2^m},\quad u=z+z^{2^m},\quad v=u^{2^m}.
\]
Then \eqref{eq:sumA=0} becomes
\begin{equation}\label{eq:F=0}
F(s,t,u,v)=0,
\end{equation}
where
\begin{equation}\label{eq:F(stuv)}
F(S,T,U,V)=S(1+S^2+T^2)+U(S^2+T^2)+S(U^2+V^2).
\end{equation}
Since $x\notin\f_2$, we have $s\ne 0$. Since $\tr_{2^n/2}(1+u+v)=1$, we have $1+u+v\ne 0$. Note that
\[
(\ )^{2^{m+1}}:\quad\begin{cases}s\mapsto t^2,\cr t\mapsto s,\end{cases}\quad
\begin{cases}u\mapsto v^2,\cr v\mapsto u.\end{cases}
\]
Hence for each $g\in\f_2[S,T,U,V]$,
\[
g(s,t,u,v)^{2^{m+1}}=g(t^2,s,v^2,u).
\]
In particular, 
\begin{equation}\label{eq:imply}
g(s,t,u,v)=0\quad \text{implies}\quad g(t^2,s,v^2,u)=0.
\end{equation}
This fact, together with \eqref{eq:F=0} and the fact $s(1+u+v)\ne 0$, is all we need to derive a contradiction.

First, we have
\[
0=\res\bigl(F(s,t,u,v),\,F(t^2,s,v^2,u);\, u\bigr)=f_1(s,t,v)^2f_2(s,t,v)^2,
\]
where $\res(\,\cdot\,,\,\cdot\,\;u)$ denotes the resultant of two polynomials with respect to $u$, and
\begin{align*}
&f_1(s,t,v)=t^2+t^2s+tsv+sv^2,\cr
&f_2(s,t,v)=t^2+t^4+t^2s+s^2+t^3v+t^2v^2.
\end{align*}
Hence $f_1(s,t,v)=0$ or $f_2(s,t,v)=0$.

We claim that $f_2(s,t,v)=0$. Otherwise, $f_1(s,t,v)=0$. Then 
\[
0=\res(f_1(s,t,v), F(t^2,s,v^2,u);s)=t^2f_3(t,u,v)^2,
\]
where
\[
f_3(t,u,v)=t^4+t^2u+tuv+v^2+t^2v^2+uv^2+v^4.
\]
We have
\begin{align*}
0\,&=\res(f_1(s,t,v),f_3(t,u,v);v)=t^4(t^4+s^2+t^2s^2+s^4+s^3u+s^2u^2)\cr
&=t^4f_2(t^2,s,u).
\end{align*}
Hence $f_2(t^2,s,u)=0$, which implies $0=f_2(s^2,t^2,v^2)=f_2(s,t,v)^2$, which is a contradiction. Hence the claim is proved.

Now
\[
0=\res(f_2(t^2,s,u),F(s,t,u,v);s)=t^4f_3(t,u,v)^2.
\]
Hence $f_3(t,u,v)=0$. On the other hand,
\[
0=\res(f_2(s,t,v),F(t^2,s,v^2,u);s)=t^4f_4(t,u,v)^2,
\]
where
\[
f_4(t,u,v)=1+t^4+t^2u+u^2+tv+t^3v+tuv+v^2+t^2v^2.
\]
We have
\[
0=\res(f_3(t,u,v),f_4(t,u,v);t)=(1+u+v)^4(1+u+v^2)^4.
\]
By \eqref{eq:imply}, $1+u+v^2=0$ implies $1+u+v=0$. Hence we always have $1+u+v=0$, which is a contradiction. This completes the proof.

\begin{rmk}\label{R4.1}\rm
The above proof is essentially based on the same idea as the proof in \cite{Dobbertin-IC-1999}. What differs the two proofs is in their approaches to polynomial equations over finite fields. The proof of \cite{Dobbertin-IC-1999} relied on clever trace arguments. Our strategy is to reduce polynomial equations continuously using resultants, hiding the subtleties behind computations. Our approach can be placed in the framework of a more general algebraic question. 
\end{rmk} 

\begin{ques}\label{Q4.2}\rm
Let $\f$ be a field and $I$ an ideal of $\f[X_1.\dots,X_k]$ with the following properties:
\begin{itemize}
\item[(i)] $I$ contains one or several given polynomials.

\item[(ii)] If $g,h\in\f[X_1,\dots,X_k]$ are such that $gh\in I$, then either $g\in I$ or $h\in I$. ($I$ is not necessarily a prime ideal because it could be the entire ring $\f[X_1,\dots,X_k]$.)

\item[(iii)] There exists $\sigma=(\sigma_1,\dots,\sigma_k)\in\f[X_1,\dots,X_k]^k$ such that $g\in I$ implies $g\circ\sigma\in I$.
\end{itemize}
The objective is to find a few elements in $\f[X_1,\dots,X_k]$, with degrees as low as possible, such that $I$ contains at least one of them.
\end{ques}

In the above proof, $I$ is the ideal of $\f_2[S,T,U,V]$ defined by
\[
I=\{g\in\f_2[S,T,U,V]:g(s,t,u,v)=0\},
\]
$I$ contains $F$, and $\sigma=(T^2,S,V^2,U)$. We showed that $I$ must contain $S$ or $1+U+V$.

\subsection{Dickson matrices}\

The investigation of the 3rd order sum-freedom of the Welch function involves Dickson matrices. Here we collect some general facts about these matrices.

Let $q$ be any prime power. For $a_0,\dots,a_{n-1}\in\f_{q^n}$, the matrix
\[
\mathcal D(a_0,\dots,a_{n-1})=\left[
\begin{matrix}
a_0&a_1&\cdots&a_{n-1}\cr
a_{n-1}^q&a_0^q&\cdots&a_{n-2}^q\cr
\vdots&\vdots&\ddots&\vdots\cr
a_1^{q^{n-1}}&a_2^{q^{n-1}}&\cdots&a_0^{q^{n-1}}
\end{matrix}\right]
\]
is called a {\em Dickson matrix}.

\begin{thm}\label{T4.3}\cite[Theorem~2.29]{Hou-ams-gsm-2018}
For $a_0,\dots,a_{n-1}\in\f_{q^n}$, the roots of the $q$-polynomial $a_0X+a_1X^q+\cdots+a_{n-1}X^{q^{n-1}}$ in $\f_{q^n}$ is a vector space over $\f_2$ of dimension $n-\rank\mathcal D(a_0,\dots,a_{n-1})$.
\end{thm}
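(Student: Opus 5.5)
The plan is to realize the $q$-polynomial $L(X)=a_0X+a_1X^q+\cdots+a_{n-1}X^{q^{n-1}}$ as an $\f_q$-linear map $L:\f_{q^n}\to\f_{q^n}$. Its set of roots in $\f_{q^n}$ is exactly $\ker L$, so it is an $\f_q$-subspace (the ``$\f_2$'' in the statement should read $\f_q$). By rank--nullity, $\dim_{\f_q}\ker L=n-\dim_{\f_q}L(\f_{q^n})$. It therefore suffices to prove that $\dim_{\f_q}L(\f_{q^n})=\rank\mathcal D(a_0,\dots,a_{n-1})$, where the rank of $\mathcal D(a_0,\dots,a_{n-1})$ is taken over $\f_{q^n}$.

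The key identity is a matrix factorization. For $x\in\f_{q^n}$ we have $x^{q^n}=x$, and therefore
\[
L(x)^{q^i}=\sum_{k=0}^{n-1}a_k^{q^i}x^{q^{k+i}}=\sum_{j=0}^{n-1}a_{j-i}^{q^i}x^{q^j},
\]
with indices taken modulo $n$. The $(i,j)$ entry of $\mathcal D(a_0,\dots,a_{n-1})$ (rows and columns indexed from $0$) is precisely $a_{j-i}^{q^i}$. Hence
\[
\bigl(L(x),L(x)^q,\dots,L(x)^{q^{n-1}}\bigr)^T=\mathcal D(a_0,\dots,a_{n-1})\,\bigl(x,x^q,\dots,x^{q^{n-1}}\bigr)^T .
\]
Now take an $\f_q$-basis $\omega_1,\dots,\omega_n$ of $\f_{q^n}$ and let $M=[\omega_j^{q^i}]$ be its Moore matrix. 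Let $M_L=[L(\omega_j)^{q^i}]$ be the Moore matrix of $L(\omega_1),\dots,L(\omega_n)$. Applying the identity column by column gives $M_L=\mathcal D(a_0,\dots,a_{n-1})\,M$. By \cite[Lemma~2.30]{Hou-ams-gsm-2018} (as used in Lemma~\ref{LA}), $M$ is invertible, so $\rank\mathcal D(a_0,\dots,a_{n-1})=\rank M_L$.

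It remains to show that $\rank M_L=\dim_{\f_q}\langle L(\omega_1),\dots,L(\omega_n)\rangle=\dim_{\f_q}L(\f_{q^n})$. This is the step needing slight care, since the cited lemma only characterizes full column rank (linear independence). Let $r$ be the $\f_q$-dimension of the span. Choose $j_1,\dots,j_r$ such that $L(\omega_{j_1}),\dots,L(\omega_{j_r})$ form an $\f_q$-basis of the span. Every other column of $M_L$ is then an $\f_q$-combination of these $r$ columns, because Frobenius fixes $\f_q$-coefficients, so $\rank M_L\le r$. Conversely, those $r$ columns form the $n\times r$ Moore matrix of $\f_q$-independent elements, which has rank $r$ by the lemma, so $\rank M_L\ge r$. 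Combining the three steps gives the claimed dimension $n-\rank\mathcal D(a_0,\dots,a_{n-1})$.
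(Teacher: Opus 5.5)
Your proof is correct and complete. The paper gives no argument of its own for this statement; it quotes it from \cite[Theorem~2.29]{Hou-ams-gsm-2018}. You are also right that the ``$\f_2$'' in the statement is a typo for $\f_q$.

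Your route is the standard one: factor $M_L=\mathcal D(a_0,\dots,a_{n-1})\,M$ through the Moore matrix of an $\f_q$-basis, then apply the Moore-matrix independence criterion \cite[Lemma~2.30]{Hou-ams-gsm-2018}, which the paper itself uses in Lemma~\ref{LA}. Every step checks out, including the index bookkeeping $(i,j)\mapsto a_{j-i}^{q^i}$ and the identification of $\rank M_L$ with $\dim_{\f_q}L(\f_{q^n})$.

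The final step, which you flag as needing care, can be shortened. Let $A\in M_n(\f_q)$ be the matrix of $L$ in the basis $\omega_1,\dots,\omega_n$, so $L(\omega_j)=\sum_k A_{kj}\omega_k$. Raising to the $q^i$-th power fixes the $A_{kj}$, so $M_L=MA$, and hence
\[
\mathcal D(a_0,\dots,a_{n-1})=MAM^{-1}.
\]
Thus the Dickson matrix is similar over $\f_{q^n}$ to the $\f_q$-matrix of $L$. This gives $\rank\mathcal D(a_0,\dots,a_{n-1})=\rank A=\dim_{\f_q}L(\f_{q^n})$ at once, because rank does not change under field extension. It also yields more than the rank, for instance that $\mathcal D$ and $L$ have the same characteristic polynomial. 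Your basis-selection argument is nonetheless perfectly valid as written.
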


\begin{defn}\label{D4.4} For an $m\times n$ matrix $A$, a set of rows with indices $i,i+1,\dots,i+k \pmod m$ is called a set of {\em continuous rows}. A set of continuous columns is defined the same way. A {\em continuous submatrix} of $A$ is a submatrix whose row indices are $i,i+1,\dots,i+k \pmod m$ and whose column indices are $j,j+1,\dots,j+l \pmod n$.
\end{defn}

\begin{lem}\label{L4.5}
Let $A$ be an $n\times n$ Dickson matrix over $\f_{q^n}$ with $\rank A=r$. Then any set of $r$ continuous rows (columns) are linearly independent over $\f_{q^n}$.
\end{lem}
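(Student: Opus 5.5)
The key observation is that the rows of a Dickson matrix are successive Frobenius twists of one another, and this drives the whole argument. Let $R_0,\dots,R_{n-1}$ be the rows of $A=\mathcal D(a_0,\dots,a_{n-1})$, indices mod $n$. Write $\pi$ for the cyclic shift of coordinates, $(c_0,\dots,c_{n-1})\mapsto(c_{n-1},c_0,\dots,c_{n-2})$. Then $R_{i+1}=\pi(R_i^{q})$, where the $q$th power is taken entrywise, and $R_0=\pi(R_{n-1}^q)$ because $a^{q^n}=a$ on $\f_{q^n}$.

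The map $\phi:v\mapsto\pi(v^q)$ is a bijection of $\f_{q^n}^n$. It is additive, and it satisfies $\phi(cv)=c^q\phi(v)$. Consequently, a relation $\sum c_iR_i=0$ is carried by $\phi$ to $\sum c_i^qR_{i+1}=0$. So $\phi$ sends linearly dependent sets of rows to linearly dependent sets, and the same holds for independent sets, since $\phi^{-1}$ has the same semilinear form.

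The proof then goes as follows.

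\emph{Step 1.} Let $k$ be the smallest index such that $R_k$ lies in the span of $R_0,\dots,R_{k-1}$. Then $R_0,\dots,R_{k-1}$ are independent. Such a $k$ exists and satisfies $k\le r$, because otherwise $R_0,\dots,R_r$ would be $r+1$ independent rows.

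\emph{Step 2.} Apply $\phi$ repeatedly. From $R_k=\sum_{j<k}c_jR_j$ we get $R_{k+1}=\sum_{j<k}c_j^qR_{j+1}$, which lies in the span of $R_1,\dots,R_k$, and hence in the span of $R_0,\dots,R_{k-1}$. By induction every $R_i$ lies in that span. Therefore the rank is at most $k$, so $k=r$.

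\emph{Step 3.} The rows $R_0,\dots,R_{r-1}$ are independent. Applying $\phi^i$ shows that $R_i,\dots,R_{i+r-1}$ (mod $n$) are independent for every $i$. This is exactly the claim about any $r$ continuous rows.

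For columns, there are two options. One is to run the same argument on $A^T$: the columns $C_j$ satisfy $C_{j+1}=\pi'(C_j^q)$ for a cyclic shift $\pi'$, since entry $(i,j)$ equals $a_{j-i}^{q^i}$. The other is to note that $A^T$ is again a Dickson matrix, namely $\mathcal D(b_0,\dots,b_{n-1})$ with $b_k=a_{-k}^{q^{k}}$ (indices mod $n$), and that it has the same rank.

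The main care point is in Step 2. One has to check that the semilinearity (the coefficients become $c^q$) does not damage the span argument. Since spans over the field $\f_{q^n}$ are closed under the Frobenius action on the coefficients, this is harmless. The only remaining task is to verify the index bookkeeping $R_{i+1}=\pi(R_i^q)$ against the displayed form of $\mathcal D$.
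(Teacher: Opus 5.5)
Your proof is correct and follows essentially the same route as the paper. The paper, after a cyclic ``without loss of generality'', takes the first row $w_s$ ($s\le r$) lying in the span of its predecessors and propagates that relation through the Frobenius twist to get $\rank A\le s-1$, a contradiction. Your version spells out what the paper leaves implicit: the cyclic shift hidden in its ``apply $(\ )^q$'' step, the transport by $\phi^i$ that justifies the WLOG, and the column case via $A^T$ being again a Dickson matrix (which Corollary~\ref{C4.6} actually uses).
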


\begin{proof}
Write
\[
A=\left[\begin{matrix} w_1\cr\vdots\cr w_n\end{matrix}\right].
\]
Assume to the contrary and without loss of generality that $w_1,\dots,w_r$ are linearly dependent over $\f_{q^n}$. Then there exists $s\le r$ such that $w_s$ is a linear combination of $w_1,\dots,w_{s-1}$, say
\[
w_s=b_1w_1+\cdots+b_{s-1}w_{s-1},\quad b_i\in\f_{q^n}.
\]
Apply $(\ )^q$ to the components of both sides of the above equation. We have
\begin{align*}
w_{s+1}\,&=b_1^qw_2+\cdots+b_{s-2}^qw_{s-1}+b_{s-1}^qw_s\cr
&=b_{s-1}^qb_1w_1+(b_{s-1}^qb_2+b_1^q)w_2+\cdots+(b_{s-1}^qb_{s-1}+b_{s-2}^q)w_{s-1}\cr
&\in\langle w_1,\dots,w_{s-1}\rangle.
\end{align*}
Continuing this way, we see that $w_i\in\langle w_1,\dots,w_{s-1}\rangle$ for all $s\le i\le n$. Then $\rank A\le s-1$, which is a contradiction.
\end{proof}

\begin{cor}\label{C4.6}
Let $A$ be an $n\times n$ Dickson matrix over $\f_{q^n}$ with $\rank A=r$. Then every $r\times r$ continuous submatrix of $A$ is nonsingular.
\end{cor}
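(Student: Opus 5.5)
The plan is to combine Lemma~\ref{L4.5} for rows with its analogue for columns. Let $R$ be the set of row indices and $C$ the set of column indices of an $r\times r$ continuous submatrix $B$ of $A$. By Lemma~\ref{L4.5}, the $r$ continuous rows of $A$ indexed by $R$ are linearly independent. Since $\rank A=r$, these rows span the row space of $A$. Hence every row of $A$ is an $\f_{q^n}$-linear combination of the rows indexed by $R$.

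Next I would show that the columns of the $r\times n$ submatrix $A_R$ (rows $R$, all columns) indexed by $C$ are linearly independent. Suppose $\sum_{j\in C}c_j\,\text{col}_j(A_R)=0$. Because every row of $A$ is a linear combination of the rows in $R$, the same relation holds for the full columns of $A$: $\sum_{j\in C}c_j\,\text{col}_j(A)=0$. By the column version of Lemma~\ref{L4.5}, the $r$ continuous columns of $A$ indexed by $C$ are linearly independent, so all $c_j=0$. Therefore the $r\times r$ matrix $B$ has independent columns and is nonsingular.

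The column version of Lemma~\ref{L4.5} needs a short justification. The proof of the lemma uses only one structural fact: applying $(\ )^q$ entrywise to row $w_i$ gives row $w_{i+1}$, with indices mod $n$. For columns there is an analogous shift. If $v_j$ is the $j$th column, then $v_j^q$ is $v_{j+1}$ with its entries cyclically shifted by one position. This is a fixed permutation $P$ of coordinates, so $v_{j+1}=P\,v_j^{(q)}$. A linear relation $v_s=\sum b_iv_i$ then maps to $v_{s+1}=\sum b_i^q v_{i+1}$, and the same inductive argument goes through. Alternatively, one can note that the transpose of a Dickson matrix is again Dickson-like up to reindexing: its $(i,j)$ entry is $a_{i-j}^{q^j}$, and the map $a\mapsto a$ with Frobenius twisting gives the same shift property. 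The main point to check carefully is exactly this shift property for columns, including the wraparound modulo $n$, which uses $a^{q^n}=a$ for $a\in\f_{q^n}$.
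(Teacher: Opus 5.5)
Your proof is correct and is essentially the paper's argument. Both combine the row and column versions of Lemma~\ref{L4.5} with the fact that $r$ independent continuous rows span the row space of $A$; the paper phrases this as the first $r$ rows of $[c_1,\dots,c_r]$ spanning its row space, and you phrase it as lifting a column relation from $A_R$ to all of $A$. Your justification of the column version (Frobenius composed with a cyclic coordinate shift, which is in fact also needed for rows) supplies a step the paper only asserts. A cleaner way to say it is that $A^T=\mathcal D(b_0,\dots,b_{n-1})$ with $b_k=a_{-k}^{q^k}$ (indices mod $n$), so the column statement is literally the row statement for $A^T$.
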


\begin{proof}
Without loss of generality, we only prove that the $r\times r$ leading principal submatrix of $A$ is nonsingular. Write
\[
A=[c_1,\dots,c_n].
\]
By Lemma~\ref{L4.5}, $\rank[c_1,\dots,c_r]=r$, and the first $r$ rows of $[c_1,\dots,c_r]$ span the row space of $[c_1,\dots,c_r]$. Therefore, the $r\times r$ leading principal submatrix of $A$, which consists of the first $r$ rows of $[c_1,\dots,c_r]$, is nonsingular.
\end{proof}

\begin{prop}\label{P4.7}
Let $A$ be an $n\times n$ Dickson matrix over $\f_{q^n}$. Then the following statements are equivalent:
\begin{itemize}
\item[(i)] $\rank A=r$.

\item[(ii)] All $r\times r$ continuous submatrices of $A$ are nonsingular, and for each $r<i\le n$, all $i\times i$ submatrices of $A$ are singular.

\item[(iii)] $A$ has an $r\times r$ nonsingular submatrix, and for each $r<i\le n$, $A$ has an $i\times i$ singular continuous submatrix.
\end{itemize}
\end{prop}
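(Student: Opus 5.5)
I will prove the cycle (i)$\Rightarrow$(ii)$\Rightarrow$(iii)$\Rightarrow$(i), using Corollary~\ref{C4.6} for the first implication and Lemma~\ref{L4.5} for the last. The one subtlety is that the indices must be handled carefully in the edge cases.

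(i)$\Rightarrow$(ii). Assume $\rank A=r$. Corollary~\ref{C4.6} says that every $r\times r$ continuous submatrix of $A$ is nonsingular. If $i>r$, then every $i\times i$ submatrix of $A$ is singular, because a nonsingular $i\times i$ submatrix would force $\rank A\ge i>r$.

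(ii)$\Rightarrow$(iii). Suppose $r\ge1$. Then an $r\times r$ continuous submatrix exists, for instance the leading principal one, and (ii) says it is nonsingular. For $r<i\le n$, every $i\times i$ submatrix is singular, and in particular so is any continuous one. If $r=0$, the first requirement of (iii) is vacuous: the empty minor is nonsingular by convention. I will state this convention explicitly.

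(iii)$\Rightarrow$(i). Let $\rho=\rank A$. Since $A$ has a nonsingular $r\times r$ submatrix, $\rho\ge r$. Suppose $\rho>r$. Then $r<\rho\le n$, so (iii) supplies a $\rho\times\rho$ continuous submatrix of $A$ that is singular. But Corollary~\ref{C4.6}, applied to $A$ with its actual rank $\rho$, says every $\rho\times\rho$ continuous submatrix is nonsingular. This contradiction gives $\rho=r$.

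The main point to check is that Corollary~\ref{C4.6} really covers arbitrary continuous submatrices with cyclic wraparound, not just the leading principal one that its proof treats "without loss of generality." The justification is the cyclic symmetry of a Dickson matrix. Applying $(\ )^{q}$ entrywise to $A$ and permuting rows and columns cyclically yields $A$ again. Concretely, row $i+1$ is the Frobenius image of row $i$ shifted cyclically by one column, and Frobenius preserves rank and singularity of minors. So a cyclic shift of rows combined with a cyclic shift of columns (applied appropriately) maps any continuous submatrix to one that starts at position $(1,j)$. The row/column version in Lemma~\ref{L4.5} then handles arbitrary $j$: by that lemma, the $r$ chosen continuous columns are independent, and their first $r$ rows (which are continuous) span their row space. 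Once this is checked, the argument above is complete.
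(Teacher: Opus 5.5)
Your proof is correct and follows the paper's argument. As in the paper, (i)$\Rightarrow$(ii) comes from Corollary~\ref{C4.6}, the middle implications are immediate, and (iii)$\Rightarrow$(i) uses $\rank A\ge r$ together with Corollary~\ref{C4.6} to rule out $\rank A=i$ for every $r<i\le n$; your added check that Corollary~\ref{C4.6} covers wrapped-around continuous submatrices is sound, though the Frobenius shift is unnecessary because any $r$ continuous rows already form a basis of the row space when $\rank A=r$.
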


\begin{proof}
(i) $\Rightarrow$ (ii). This is Corollary~\ref{C4.6}.

\medskip
(ii) $\Rightarrow$ (i) Obvious.

\medskip
(ii) $\Rightarrow$ (iii). Obvious.

\medskip
(iii) $\Rightarrow$ (i). By Corollary~\ref{C4.6}, $\rank A\ne i$ for all $r<i\le n$. On the other hand, we clearly have $\rank A\ge r$.
\end{proof}

\subsection{Third order sum-freedom of the Welch function}\label{sec5.3}\

Recall that $n=2m+1$ and the Welch function on $\f_{2^n}$ is $W_n(X)=X^{2^m+3}$. When $m=1$, the algebraic degree of $W_3$ is 2, hence by Remark~\ref{R2.4}, $W_3$ is not 3rd order sum-free. When $m\ge 2$, the algebraic degree of $W_n$ is 3, hence by Remark~\ref{R2.4}, $W_n$ is not $k$th order sum-free for $4\le k\le n$. When $m=2$, by Theorem~\ref{T1.1}, $f$ is 3rd order sum-free. For an arbitrary $m\ge 3$, the 3rd order sum-freedom of $W_n$ is not known.

Assume $m\ge 2$ and let $A$ be an arbitrary 3-dimensional $\f_2$-affine subspace of $\f_{2^n}$. The question is to determine if the sum $\sum_{a\in A}f(a)$ can be 0. By Lemma~\ref{L2.1}, we may assume that $A$ is an $\f_2$-linear subspace of $\f_{2^n}$. Since $W_n(X)$ is a power function, we may further assume that $1\in A$, say $A$ has a basis $1,x,y$ over $\f_2$. Then we have 
\begin{align*}
\sum_{a\in A}f(a)\,&=\sum_{a_1,a_2,a_3\in\f_2}(a_1x+a_2y+a_3)^{2^m+2+1}\cr
&=\sum_{a_1,a_2,a_3\in\f_2}(a_1x^{2^m}+a_2y^{2^m}+a_3)(a_1x^2+a_2y^2+a_3)(a_1x+a_2y+a_3)\cr
&=\sum_{a_1,a_2,a_3\in\f_2}a_1a_2a_3(x^{2^m}y^2+x^{2^m}y+x^2y^{2^m}+x^2y+xy^{2^m}+xy^2)\cr
&=x^{2^m}y^2+x^{2^m}y+x^2y^{2^m}+x^2y+xy^{2^m}+xy^2\cr
&=g(x,y),
\end{align*}
where
\[
g(X,Y)=Y^{2^m}(X^2+X)+Y^2(X^{2^m}+X)+Y(X^{2^m}+X^2).
\]
Therefore, $W_n(X)$ is not 3rd order sum-free if and only if 
\begin{equation}\label{eq:g=0}
g(x,y)=0\quad\text{for some}\ x\in\f_{2^n}\setminus\f_2\ \text{and}\ y\in\f_{2^n}\setminus\langle 1,x\rangle.
\end{equation}
Clearly, $g(x,y)=0$ when $y\in\langle 1,x\rangle$. Note that $g(x,Y)$ is a 2-polynomial in $Y$. By Theorem~\ref{T4.3}, \eqref{eq:g=0} holds if and only if the Dickson matrix
\begin{equation}\label{eq:D(x)}
D(x):=\mathcal D(\underset{\rule{0pt}{1em}0}{x^{2^m}+x^2},\,\underset{\rule{0pt}{1em}1}{x^{2^m}+x},\,0,\cdots,0,\,\underset{\rule{0pt}{1em}m}{x^2+x},\,0,\cdots,\underset{\rule{0pt}{1em}n-1}0)
\end{equation}
has rank $\le n-3$ for some $x\in\f_{2^n}\setminus\f_2$. Clearly, $\rank D(x)\le n-2$ (since $g(x,y)=0$ for $y\in\langle 1,x\rangle$. Thus by Proposition~\ref{P4.7}, $\rank D(x)\le n-3$ if and only if some (and hence every) $(n-2)\times(n-2)$ continuous submatrix of $D(x)$ is singular. Therefore, we have the following theorem.

\begin{thm}\label{T4.8}
Let $n\ge 5$ and let $D^*(x)$ denote the leading $(n-2)\times(n-2)$ principal submatrix of the Dickson matrix $D(x)$ in \eqref{eq:D(x)}. Then the Welch function $W_n(X)$ on $\f_{2^n}$ is not 3rd order sum-free if and only if $\det D^*(x)=0$ for some $x\in\f_{2^n}\setminus\f_2$.
\end{thm}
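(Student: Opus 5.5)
The argument assembles the reductions made just before the statement. The plan is to prove both directions through the chain
\[
W_n \text{ not 3rd order sum-free}\iff \exists\,x\notin\f_2:\ \rank D(x)\le n-3 \iff \exists\,x\notin\f_2:\ \det D^*(x)=0 .
\]

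\emph{First equivalence.} By Lemma~\ref{L2.1} (the exponent $2^m+3$ has binary weight $3$), we may translate $A$ to a linear subspace. Since $W_n$ is a power function, we may scale so that $1\in A$. The computation above then shows that $W_n$ fails to be 3rd order sum-free exactly when \eqref{eq:g=0} holds.

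For fixed $x\in\f_{2^n}\setminus\f_2$, the map $Y\mapsto g(x,Y)$ is a $2$-polynomial, i.e.\ $\f_2$-linear. Its root set $K_x$ always contains $\langle 1,x\rangle$, because $g(x,1)=g(x,x)=0$ by direct substitution. Hence \eqref{eq:g=0} holds for this $x$ iff $\dim_{\f_2}K_x\ge 3$. By Theorem~\ref{T4.3} (with $\f_q$ replaced by $\f_2$), $\dim_{\f_2}K_x=n-\rank D(x)$. Writing $g(x,Y)$ in the form $a_0Y+a_1Y^2+a_mY^{2^m}$ gives the Dickson matrix \eqref{eq:D(x)}. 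So the condition is $\rank D(x)\le n-3$. In particular $\rank D(x)\le n-2$ always holds.

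\emph{Second equivalence.} I apply Proposition~\ref{P4.7} and Corollary~\ref{C4.6} to $D(x)$, using only that its rank $r$ satisfies $r\le n-2$.
\begin{itemize}
\item If $r=n-2$, Corollary~\ref{C4.6} says every $(n-2)\times(n-2)$ continuous submatrix is nonsingular. The leading principal one is $D^*(x)$, so $\det D^*(x)\ne 0$.
\item If $r\le n-3$, every $(n-2)\times(n-2)$ minor vanishes because the rank is too small. So $\det D^*(x)=0$.
\end{itemize}
Thus, for $x\notin\f_2$, $\det D^*(x)=0$ iff $\rank D(x)\le n-3$. Quantifying over $x$ gives the theorem.

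The assumption $n\ge 5$ (i.e.\ $m\ge2$) makes $D^*$ a genuine submatrix of size $n-2\ge 3$. It also keeps the positions $0,1,m$ distinct, so the Dickson-matrix description of $g(x,Y)$ is valid.

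The main obstacle, mild as it is, is the correct invocation of Corollary~\ref{C4.6}: the rank must be known to be \emph{exactly} $n-2$ in the nonsingular case. This holds because $1,x$ are $\f_2$-independent roots, giving $\rank D(x)\le n-2$, while the leading submatrix being nonsingular forces $\rank D(x)\ge n-2$. I would also double-check that the row convention of $\mathcal D$ matches Theorem~\ref{T4.3}, so that $\dim K_x=n-\rank D(x)$ applies verbatim.
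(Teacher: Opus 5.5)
Your proposal is correct and follows the paper's argument essentially step for step. You reduce via Lemma~\ref{L2.1} and scaling to $g(x,y)=0$, pass to $\rank D(x)\le n-3$ via Theorem~\ref{T4.3} using $\langle 1,x\rangle\subseteq\{y: g(x,y)=0\}$, and then conclude $\det D^*(x)=0$ from the continuous-submatrix result; your explicit case split $r=n-2$ versus $r\le n-3$ with Corollary~\ref{C4.6} is just an unpacked form of the paper's appeal to Proposition~\ref{P4.7}.
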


We may replace the element $x$ in \eqref{eq:D(x)} with an indeterminate $X$; the resulting matrix is $D(X)$, and its leading $(n-2)\times(n-2)$ principal submatrix is $D^*(X)$.  The roots of $\det D^*(X)$ in $\f_{2^n}$ are precisely the roots of $\gcd(\det D^*(X),X^{2^n}+X)$ and $0,1$ are among these roots. Therefore, we have the following corollary.

\begin{cor}\label{C4.9}
The Welch function $W_n(X)$ on $\f_{2^n}$ ($n\ge 5$) is not 3rd order sum-free if and only if 
\begin{equation}\label{eq:deggcd}
\deg\gcd(\det D^*(X),\,X^{2^n}+X)>2.
\end{equation}
\end{cor}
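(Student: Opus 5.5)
By Theorem~\ref{T4.8}, $W_n(X)$ fails to be 3rd order sum-free exactly when $\det D^*(x)=0$ for some $x\in\f_{2^n}\setminus\f_2$. So the plan is to translate this root condition into a statement about the degree of a gcd. We will work with the polynomial $\det D^*(X)\in\f_{2^n}[X]$ obtained by replacing $x$ with an indeterminate in the entries of \eqref{eq:D(x)}. Each entry of $D(X)$ is of the form $a_j(X)^{2^i}$, where $a_j$ is one of $X^{2^m}+X^2$, $X^{2^m}+X$, $X^2+X$ or $0$. Hence $\det D^*(X)$ is a polynomial with coefficients in $\f_2$, and for every $x\in\f_{2^n}$ its value at $X=x$ equals $\det D^*(x)$.

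First we check that $\det D^*(X)$ is not the zero polynomial. Otherwise $\det D^*(x)=0$ for every $x\in\f_{2^n}\setminus\f_2$, and the statement should then be read using the convention $\gcd(0,X^{2^n}+X)=X^{2^n}+X$, which has degree $2^n>2$; the equivalence still holds in that case. For $\det D^*(X)\ne0$, recall that $X^{2^n}+X=\prod_{a\in\f_{2^n}}(X-a)$ is squarefree. Therefore $\gcd(\det D^*(X),X^{2^n}+X)=\prod(X-a)$, where $a$ runs over the roots of $\det D^*(X)$ in $\f_{2^n}$, and the degree of the gcd equals the number of such roots.

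Next we show that $0$ and $1$ are always roots. For $x\in\f_2$ every $a_j(x)$ vanishes, so $D(x)$ is the zero matrix and $\det D^*(x)=0$. Consequently the number of roots in $\f_{2^n}$ exceeds $2$ if and only if some $x\in\f_{2^n}\setminus\f_2$ is a root. Combined with Theorem~\ref{T4.8}, this is exactly \eqref{eq:deggcd}.

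The main point to watch is that substituting a field element into the polynomial determinant agrees with the determinant of the matrix at that element. This holds because evaluation $X\mapsto x$ is a ring homomorphism, and it commutes with the Frobenius powers appearing in the entries. Apart from this, the argument is routine.
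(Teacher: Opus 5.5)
Your proposal is correct and follows the paper's argument: combine Theorem~\ref{T4.8} with the facts that the roots of $\det D^*(X)$ in $\f_{2^n}$ are exactly the simple roots of $\gcd(\det D^*(X),X^{2^n}+X)$ and that $0,1$ are always among them. Your extra remarks make explicit what the paper leaves implicit, namely the squarefreeness of $X^{2^n}+X$, the degenerate case $\det D^*(X)=0$, and the compatibility of evaluation with the Frobenius powers.
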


We computed the above degree for $5\le n=2m+1\le 15$ (see Table~\ref{Tb1}), and we propose the following conjecture.

\begin{conj}\label{C4.10}
For $n=2m+1\ge 7$, the Welch function on $\f_{2^n}$ is not $3$rd order sum-free.
\end{conj}

\begin{table}
\caption{$\deg\gcd(\det D^*(X),\,X^{2^n}+X)$}\label{Tb1}
\renewcommand*{\arraystretch}{1.2}
\centering
\begin{tabular}{c|c}
$n=2m+1$ & $\deg\gcd(\det D^*(X),\,X^{2^n}+X)$\\ \hline
5&2 \\
7&44 \\
9&134 \\
11&464 \\
13&2186 \\
15&7988\\
\end{tabular}
\end{table}

Table~\ref{Tb1} shows that the conjecture is true for $7\le n\le 15$. Moreover, it suggests that the degree in \eqref{eq:deggcd} grows rapidly as $n$ increases.

\subsection{Power functions of algebraic degree 3}\

Consider 
\[
f_{n,i_1,i_2}(X)=X^{2^0+2^{i_1}+2^{i_2}},
\]
as a function from $\f_{2^n}$ to $\f_{2^n}$, where $0<i_1<i_2<n$. The computations in \S\ref{sec5.3} on the Welch function carry over to the function $f_{n,i_1,i_2}$. Let $A_{n,i_1,i_2}(X)$ denote the leading $(n-2)\times(n-2)$ principal submatrix of the Dickson matrix
\[
\mathcal D(\underset{\rule{0pt}{1em}0}{X^{2^{i_1}}+X^{2^{i_2}}},\,\underset{\rule{0pt}{1em}i_1}{X^{2^0}+X^{2^{i_2}}},\,0,\cdots,0,\,\underset{\rule{0pt}{1em}i_2}{X^{2^0}+X^{2^{i_1}}},\,0,\cdots,\underset{\rule{0pt}{1em}n-1}0),
\]
and let
\[
d_{n,i_1,i_2}=\deg\gcd(\det A_{n,i_1,i_2}(X),\,X^{2^n}+X).
\]
Then $f_{n,i_1,i_2}$ is 3rd order sum-free if and only if $d_{n,i_1,i_2}=2$. We computed $d_{n,i_1,i_2}$ for $0<i_1<i_2<n\le 13$, and we found that $d_{n,i_1,i_2}=2$ precisely when $\{0,i_1,i_2\}\overset n\sim\{0,1,2\}$, where the equivalence $\overset n\sim$ is defined at the end of Section~\ref{sec2}. This leads to the following conjecture.

\begin{conj}\label{C5.11}
For $0<i_1<i_2<n$, the function $f_{n,i_1,i_2}(X)$ is $3$rd order sum-free on $\f_{2^n}$ if and only if $\{0,i_1,i_2\}\overset n\sim\{0,1,2\}$.
\end{conj}

It is easy to show that when $n=2m+1\ge 7$, $\{0,1,m\}\overset n{\not\sim}\{0,1,2\}$. Hence Conjecture~\ref{C5.11} implies Conjecture~\ref{C4.10}.

\section*{Acknowledgment}

Xiang-dong Hou was partially supported by NSF RTG grant 2342254.



\end{document}